\newtheorem{thm}{Theorem}[section]
\newtheorem{cor}[thm]{Corollary}
\newtheorem{lem}[thm]{Lemma}
\newtheorem{prop}[thm]{Proposition}
\theoremstyle{definition}
\newtheorem{exmp}[thm]{Example}
\newtheorem{rem}[thm]{Remark}
\numberwithin{equation}{section}
\newcommand{\Z}{{\mathbb{Z}}}
\newcommand{\Q}{{\mathbb{Q}}}
\newcommand{\F}{{\mathbb{F}}}
\newcommand{\N}{{\mathbb{N}}}
\newcommand{\fS}{{\mathfrak{S}}}
\newcommand{\cD}{{\mathcal{D}}}
\newcommand{\cE}{{\mathcal{E}}}
\newcommand{\cI}{{\mathcal{I}}}
\newcommand{\cH}{{\mathcal{H}}}
\newcommand{\cM}{{\mathcal{M}}}
\newcommand{\cN}{{\mathcal{N}}}
\newcommand{\bq}{{\boldsymbol{q}}}
\DeclareMathOperator{\Irr}{Irr}
\renewcommand{\leq}{\leqslant}
\renewcommand{\geq}{\geqslant}
\begin{document}
\title[Green functions and Glauberman degree-divisibility]{Green
functions and Glauberman degree-divisibility}
\author[Meinolf Geck]{Meinolf Geck}
\address{IAZ - Lehrstuhl f\"ur Algebra, Universit\"at Stuttgart, 
Pfaffenwaldring 57, D--70569 Stuttgart, Germany}
\email{meinolf.geck@mathematik.uni-stuttgart.de}

\subjclass{Primary 20C33; Secondary 20C15, 20G40}
\keywords{Glauberman correspondence, finite groups of Lie type, Green 
functions, character sheaves}
\date{April 4, 2019}

\begin{abstract} The Glauberman correspondence is a fundamental bijection
in the character theory of finite groups. In 1994, Hartley and Turull 
established a degree-divisibility property for characters related by
that correspondence, subject to a congruence condition which should hold 
for the Green functions of finite groups of Lie type, as defined by Deligne 
and Lusztig. Here, we present a general argument for completing the proof 
of that congruence condition. Consequently, the degree-divisibility
property holds in complete generality.
\end{abstract}

\maketitle

\section{Introduction} \label{sec0}

This paper is mainly about representations of finite groups of Lie type, 
but the motivation comes from the general character theory of
finite groups. Let $\Gamma$, $S$ be finite groups of coprime order such 
that $S$ is solvable and acts by automorphisms on $\Gamma$. Then the 
Glauberman correspondence \cite{Glau} is a certain canonical bijection
\[ \Irr_S(\Gamma)\leftrightarrow \Irr(C_\Gamma(S)),\qquad \theta 
\leftrightarrow \theta^*,\] 
where $\Irr_S(\Gamma)$ is the set of $S$-invariant irreducible characters 
of $\Gamma$ and $C_\Gamma(S)$ the subgroup of $\Gamma$ fixed by all elements 
of $S$. It is of fundamental importance in various current trends of 
research; see, e.g., Navarro \cite[\S 2.5]{Nav}. Using the classification 
of finite simple groups, and subject to a certain congruence condition on 
Green functions of finite groups of Lie type, Hartley--Turull
\cite[Theorem~A]{HaTu} showed the following result, which gives a positive
answer to a problem described as perhaps one of the deepest in character 
theory by Navarro \cite[\S 1]{Nav}. 

\medskip
\noindent {\bf Glauberman degree-divisibility}. \textit{Assume
that $\theta \in \Irr_S(\Gamma)$ and $\theta^*\in \Irr(C_\Gamma(S))$ 
correspond to each other as above. Then $\theta^*(1)$ divides $\theta(1)$}. 
\medskip

That congruence condition on Green functions has been explicitly 
verified in \cite[Prop.~7.5]{HaTu} for groups of type $A_n$ and in 
\cite[Prop.~7.7]{HaTu} for groups of type ${^3\!D}_4$, ${^2\!B}_2$, 
${^2\!G}_2$, ${^2\!F}_4$; by the comments on \cite[p.~204]{HaTu} it is also
known in a large number of further cases. In this paper, we present a general 
argument which completes the proof of that condition. Hence, the Glauberman 
degree-divisibility property will hold unconditionally and in complete 
generality.

We shall use the full power of the geometric representation theory of finite 
groups of Lie type, as developed by Lusztig \cite{L1}, \cite{L2a}--\cite{L2e},
\cite{L5}, \cite{L10}; an essential role will also be played by the results 
of Shoji \cite{S2}, \cite{S3} concerning the relation between irreducible 
representations and character sheaves. 

Let us now explain that congruence condition on Green functions. We consider
a connected reductive algebraic group $G$ (over an algebraic closure of 
$\F_p$ where $p$ is a prime) and an endomorphism $F\colon G\rightarrow G$ such
that some power of $F$ is a Frobenius map. The Green function $Q_T$ 
corresponding to an $F$-stable maximal torus $T\subseteq G$ is introduced by 
Deligne--Lusztig \cite{DeLu} (see also Carter \cite{C2}). It is a function 
defined on the set of unipotent elements of $G^F$, with values in $\Z$. The
construction involves the theory of $\ell$-adic cohomology applied to certain 
algebraic varieties on which the finite group~$G^F$ acts. In order to 
indicate the dependence on~$F$, we shall write $Q_{T,F}$ instead of 
just~$Q_T$. Now we can state:

\medskip
\noindent {\bf Congruence Condition} (Hartley--Turull \cite[Condition~6.9, 
6.10]{HaTu}). \textit{Let $T\subseteq G$ be an $F$-stable maximal torus 
and $u\in G^F$ be unipotent. Let $r\in \N$ be a prime such that 
$r$ does not divide the order of $|G^{F^r}|$. Then}
\[ Q_{T,F}(u)\equiv Q_{T,F^r}(u) \bmod r.\]

\medskip
A quick informal argument to establish this condition goes as follows. 
Since Suzuki and Ree groups have already been dealt with, we can assume 
that $F$ is a Frobenius map defining an $\F_q$-rational structure 
on $G$, where $q$ is a power of~$p$. It is expected that $Q_{T,F}(u)$ is 
given by a well-defined polynomial in~$q$ with integer coefficients, such 
that $Q_{T,F^r}(u)$ is given by evaluating that same polynomial at~$q^r$. 
Then it simply remains to use Fermat's Little Theorem. For example, 
this works perfectly well if $G$ is of type $A_n$, as already noted in 
\cite[Prop.~7.5]{HaTu}, and in many further cases; see Shoji 
\cite[\S 6]{S1}. However, the required information is not yet available for
all groups over fields of small characteristic. And 
even when it is known, then some additional care is
needed since there are cases where the Green functions are only ``PORC'' (in
the sense of Higman), that is, \underline{p}olynomial \underline{o}n 
\underline{r}esidue \underline{c}lasses of~$q$; see Beynon--Spaltenstein 
\cite{BeSp}. Thus, it seems desirable to find a general argument, uniformly 
for all characteristics~$p$ and appropriately dealing with the 
``PORC'' phenomenon~---~and this is what we will do in this paper.

It was first shown by Lusztig \cite{L5} (with some mild restrictions 
on~$q$) and then by Shoji \cite{S2}, \cite{S3} (in complete generality) 
that the original Green functions of \cite{DeLu} can be identified with 
another type of Green functions defined in terms of Lusztig's character 
sheaves \cite{L2b}. This provides new, extremely powerful tools. 

In Section~\ref{sec1}, we review the general plan for determining the Green 
functions, taking into account the above developments. The main result of 
Section~\ref{sec2} is Theorem~\ref{thmsplit}, which is inspired by 
\cite{pbad} and implies a crucial ``PORC'' property. In Section \ref{sec3} 
we recall the basic ingredients of the Lusztig--Shoji algorithm which reduces
the computation of the Green functions to the determination of certain signs. 
Our Theorem~\ref{thmsplit} does not determine these signs, but it ensures 
that the signs behave well with respect to replacing $F$ by~$F^r$. In 
Section~\ref{sec4} we put all these pieces together to complete the proof 
of the Congruence Condition.

We only mention here that Theorem~\ref{thmsplit} is also useful in another 
direction, for computational purposes: in \cite{small} it is the main 
theoretical tool to complete the computation of Green functions for several 
cases where these have not been previously known (e.g., type $E_7$ in 
characteristic $2,3$). 

We assume some general familiarity with the theory of finite groups of Lie 
type and the character theory of these groups; see, e.g., \cite{C2}, 
\cite{cbms}, \cite{first}. 

\medskip
\textit{Acknowledgements}. The author is indebted to Gunter Malle and Jay 
Taylor for pointing out the open question about Green functions in the article
of Hartley--Turull \cite{HaTu} at the Oberwolfach workshop ``Representations
of finite groups'' in March 2019. I am also grateful to Gabriel Navarro for 
helpful comments, and to Jay Taylor for a careful reading of the paper. This 
work is a contribution to the SFB-TRR 195 ``Symbolic Tools in Mathematics 
and their Application'' of the German Research Foundation (DFG). 

\section{Green functions and character sheaves} \label{sec1}

Let $p$ be a prime and $k=\overline{\F}_p$ be an algebraic closure of the
field with $p$ elements. Let $G$ be a connected reductive algebraic group 
over $k$ and assume that $G$ is defined over the finite subfield $\F_q
\subseteq k$, where $q=p^m$ for some $m\geq 1$. Let $F\colon G\rightarrow 
G$ be the corresponding Frobenius map. Let $B_0 \subseteq G$ be an 
$F$-stable Borel subgroup and $T_0\subseteq B_0$ be an $F$-stable maximal
torus. Let $W=N_G(T_0)/T_0$ be the corresponding Weyl group. For each 
$w\in W$, let $R_w$ be the virtual representation of the finite group 
$G^F$ defined by Deligne--Lusztig \cite[\S 1]{DeLu}. (In the setting 
of \cite[\S 7.2]{C2}, we have $\mbox{Tr}(g,R_w)=R_{T_w,1}(g)$ for $g\in G^F$,
where $T_w\subseteq G$ is an $F$-stable maximal torus obtained from $T_0$ 
by twisting with~$w$, and $1$ stands for the trivial character of $T^F$.) 
This construction is carried out over $\overline{\Q}_\ell$, an algebraic
closure of the $\ell$-adic numbers where $\ell$ is a prime not equal to~$p$. 
The corresponding Green function is defined by 
\[ Q_w\colon G_{\text{uni}}^F \rightarrow \overline{\Q}_\ell,
\qquad u \mapsto \mbox{Tr}(u,R_w),\]
where $G_{\text{uni}}$ denotes the set of unipotent elements of $G$.
It is known that $Q_w(u)\in \Z$ for all $u\in G_{\text{uni}}^F$;
see \cite[\S 7.6]{C2}. So the character formula \cite[7.2.8]{C2} shows 
that we also have $\mbox{Tr}(g,R_w)\in \Z$ for all $g\in G^F$.

The general plan for determining the values of $Q_w$
is explained in Lusztig \cite[Chap.~24]{L2e} and Shoji \cite[\S 5]{S1}, 
\cite[1.1--1.3]{S6a} (even for generalised Green functions, which we will 
not consider here). We will have to go through some of the steps of that plan.

\subsection{Almost characters} \label{sub21}

The Frobenius map $F$ induces an automorphism of $W$ which we denote by 
$\gamma\colon W\rightarrow W$. Let $\Irr(W)$ be the set of irreducible 
representations of $W$ over $\overline{\Q}_\ell$ (up to isomorphism). Let 
$\Irr(W)^\gamma$ be the set of all those $E\in \Irr(W)$ which are 
$\gamma$-invariant, that is, there exists a bijective linear map $\sigma_E
\colon E\rightarrow E$ such that $\sigma_E {\circ} w=\gamma(w){\circ} \sigma_E
\colon E\rightarrow E$ for all $w\in W$. Note that $\sigma_E$ is only 
unique up to scalar multiples but, if $\gamma$ has order $d\geq 1$, then
one can always find some $\sigma_E$ such that 
\[\sigma_E^d=\mbox{id}_E\qquad\mbox{and}\qquad \mbox{Tr}(\sigma_E{\circ} w,
E) \in \Z\quad\mbox{for all $w\in W$};\]
see \cite[3.2]{L1}. In what follows, we assume that a fixed choice of 
$\sigma_E$ satisfying the above conditions has been made for each $E\in
\Irr(W)^\gamma$. (For example, one could take the ``preferred'' choice for 
$\sigma_E$ specified by Lusztig \cite[17.2]{L2d}.) For $E\in \Irr(W)^\gamma$,
the corresponding almost character is the class function $R_{E}\colon 
G^F\rightarrow \overline{\Q}_\ell$ defined by  
\[ R_{E}(g):=\frac{1}{|W|} \sum_{w\in W} \mbox{Tr}(\sigma_E {\circ} w,
E)\mbox{Tr}(g,R_w) \qquad \mbox{for all $g\in G^F$}.\]
We have $R_{E}(g)\in \Q$ for all $g\in G^F$. By \cite[3.9]{L1}, the functions 
$R_E$ are orthonormal with respect to the standard inner product on class 
functions of $G^F$. Furthermore, by \cite[3.19]{cbms}, we have 
\[ Q_w(u)=\sum_{E\in \Irr(W)^\gamma} \mbox{Tr}(\sigma_E 
{\circ} w,E) R_{E}(u)\qquad \mbox{for $w\in W$, $u\in 
G_{\text{uni}}^F$}.\]
Hence, knowing the values of all Green functions $Q_w$ is equivalent to 
knowing the values of all $R_{E}$ on $G_{\text{uni}}^F$.

\subsection{Constructible $\overline{\Q}_\ell$-sheaves} \label{sub22}

Let $\cD G$ be the bounded derived
category of constructible $\overline{\Q}_\ell$-sheaves on $G$ (in the 
sense of Beilinson, Bernstein, Deligne \cite{bbd}), which are equivariant 
for the action of $G$ on itself by conjugation. The ``character sheaves'' 
on $G$, defined by Lusztig \cite{L2a}, all belong to $\cD G$. 
Consider any object $A\in \mathcal{D}G$ and suppose that its inverse 
image $F^*A$ under the Frobenius map is isomorphic to $A$ in $\mathcal{D}G$.
Let $\phi \colon F^*A \stackrel{\sim}{\rightarrow} A$ be an isomorphism. 
Then $\phi$ induces a linear map $\phi_{i,g}\colon \cH_{F(g)}^i(A)
\rightarrow \cH_g^i(A)$ for each $i$ and $g\in G$, where $\cH^i(A)$ denotes 
the $i$-th cohomology sheaf of $A$ and $\cH_g^i(A)$ the stalk at~$g\in G$. 
By \cite[8.4]{L2b}, this gives rise to a class function $\chi_{A,\phi} 
\colon G^F\rightarrow \overline{\Q}_\ell$, called a ``characteristic 
function'' of~$A$, defined by 
\[\chi_{A,\phi}(g)=\sum_i (-1)^i \mbox{Tr}(\phi_{i,g},\cH_g^i(A))\qquad
\mbox{for $g \in G^F$}.\] 
Note that, by a version of Schur's Lemma, $\phi$ is 
unique up to a non-zero scalar; hence, $\chi_{A,\phi}$ is unique up to a
non-zero scalar. If $F^*A\cong A$, then one can choose an isomorphism 
$\phi_A \colon F^*A \stackrel{\sim}{\rightarrow} A$ such that the values of
$\chi_{A,\phi_A}$ are cyclotomic integers and the standard inner 
product of $\chi_{A,\phi_A}$ with itself is equal to~$1$. The precise 
conditions which guarantee these properties are formulated in 
\cite[13.8]{L2c}, \cite[25.1]{L2e}; note that these conditions specify
$\phi_A$ up to multiplication by a root of unity. 

\subsection{The complexes $A_E$} \label{sub23}

Lusztig \cite[\S 8.1]{L2b} describes a geometric induction process by 
which one obtains objects in $\cD G$ from objects in $\cD L$ where $L$ is 
a Levi subgroup of some parabolic subgroup of $G$. Applying this to $L=T_0$
and the constant local system $\overline{\Q}_\ell$ on $T_0$, we obtain a 
well-defined complex $K\in \cD G$ together with a canonical isomorphism 
$\varphi\colon F^*K\stackrel{\sim}{\rightarrow} K$. The restriction of the
corresponding characteristic function $\chi_{K,\varphi}\colon G^F\rightarrow 
\overline{\Q}_\ell$ to $G_{\text{uni}}^F$ is an example of a ``generalised
Green function'', as defined in \cite[\S 8.3]{L2b}; see also \cite[1.7]{S2}. 
We have $\mbox{End}(K) \cong \overline{\Q}_\ell[W]$ (see \cite[24.2]{L2b}) 
and $K$ has a canonical decomposition
\[ K\cong \bigoplus_{E\in \Irr(W)} V_E\otimes A_E,\]
where $A_E\in \cD G$ is a character sheaf and $V_E=\mbox{Hom}(A_E,K)$ 
is an irreducible $W$-module isomorphic to $E\in \Irr(W)$; see 
\cite[1.2]{S6a}. Now let $E\in \Irr(W)^\gamma$. Then we also have $F^*A_E
\cong A_E$ and, using our choice of $\sigma_E\colon E\rightarrow E$ in 
\S \ref{sub21}, we can single out a particular isomorphism $\phi_{A_E}
\colon F^*A_E \stackrel{\sim}{\rightarrow} A_E$ as in \S \ref{sub22}. 
Since this will be important later, let us briefly indicate how this is 
done, following \cite[24.2]{L2e} or \cite[1.3]{S6a}. We start with any 
isomorphism $\phi_{A_E} \colon F^*A_E \stackrel{\sim}{\rightarrow} A_E$. 
Then there is a unique linear map $\psi_E\colon V_E \rightarrow V_E$ such 
that $\phi_{A_E} \otimes \psi_E$ corresponds to $\varphi\colon F^*K 
\stackrel{\sim}{\rightarrow} K$ under the above direct sum decomposition; 
see \cite[10.4]{L2b}. Furthermore, by \cite[24.2]{L2e}, $\psi_E$ 
corresponds under a $W$-module isomorphism $V_E\cong E$ to 
a non-zero scalar $\zeta\in \overline{\Q}_\ell$ times the map $\sigma_E\colon 
E\rightarrow E$. Hence, replacing $\phi_{A_E}$ by a scalar multiple, 
we can achieve that $\zeta=1$. Having fixed this choice of $\phi_{A_E}
\colon F^*A_E \stackrel{\sim}{\rightarrow} A_E$, let $\chi_{A_E} 
\colon G^F\rightarrow \overline{\Q}_\ell$ be the corresponding characteristic 
function. Then, by the main result of Lusztig \cite{L5} and by Shoji 
\cite[Theorem~5.5]{S3} (see also the argument in \cite[2.17, 2.18]{S2}), 
we have 
\[ R_{E}(g)=(-1)^{\dim T_0}\chi_{A_E}(g)\qquad\mbox{for all $g\in G^F$}.\]
(In \cite[2.18]{S2}, it is assumed that $q$ is a sufficiently
large power of~$p$, but this condition is later removed thanks to 
\cite[Theorem~5.5]{S3}.) The above identity is a special case of a more 
general conjecture about the relation between almost characters and 
characteristic functions of character sheaves; see \cite[p.~226]{L2b}, 
\cite{S2}, \cite{S3}.

\subsection{The Springer correspondence} \label{sub24}

Let $\cN_G$ be the set of all pairs $(C,\cE)$ where $C$ is a unipotent
class in $G$ and $\cE$ is a $G$-equivariant irreducible 
$\overline{\Q}_\ell$-local system on $C$ (up to isomorphism). The Springer
correspondence defines an injective map 
\[ \iota_G\colon \Irr(W)\hookrightarrow \cN_G\]
such that, if $E\in \Irr(W)$ and $\iota_G(E)=(C,\cE)$, then we have 
\[ \mbox{u-supp}(A_E)\subseteq \overline{C} \qquad \mbox{and}\qquad
\cH^i(A_E)|_C\cong \left\{\begin{array}{cl} \cE & \mbox{ if
$i=-\dim C-\dim T_0$},\\ 0 & \mbox{ otherwise}.\end{array}\right.\]
See Lusztig \cite{LuIC}, \cite[Chap~24]{L2e}, and the references there. 
Here, $\mbox{u-supp}(A)$ for any $A\in \cD G$ is defined as the Zariski 
closure of 
\[\{g\in G_{\text{uni}}\mid \cH_g^i(A) \neq \{0\} \mbox{ for some $i$}\}.\]
Given $E\in \Irr(W)$ and $\iota_G(E)= (C,\cE)$, we define 
\[ d_E:=(\dim G-\dim C-\dim T_0)/2.\]
Note that $\dim C_G(g)\geq \dim T_0$ for $g\in G$. Furthermore,
$d_E\in\Z_{\geq 0}$ since $\dim G-\dim T_0$ is always even and so is 
$\dim C$; see  \cite[\S 5.10]{C2} and the references there. 

\subsection{The $Y$-functions} \label{sub25}

Let $E\in \Irr(W)^\gamma$ and $\iota_G(E)=(C,\cE)$. Then $F(C)=C$ and
$F^*\cE\cong \cE$.  Since $\cE\cong \cH^i(A_E)|_C$ for $i=-\dim C-\dim T_0$,
the isomorphism $\phi_{A_E}\colon F^*A_E \stackrel{\sim}{\rightarrow} A_E$ 
induces a map $F^*\cE \stackrel{\sim}{\rightarrow} \cE$ which we can write 
as $q^{d_E}\psi$ where $\psi\colon F^*\cE\stackrel{\sim}{\rightarrow} \cE$ 
is an isomorphism.  With this normalisation, $\psi$ induces an automorphism
of finite order $\psi_g\colon \cE_g\rightarrow \cE_g$ at each stalk $\cE_g$ 
where $g\in C^F$; see \cite[24.2.4]{L2e}. For $g\in C^F$ and $i=
-\dim C -\dim T_0$, we have 
\[\chi_{A_E}(g)=(-1)^{i} \mbox{Tr}\bigl(\phi_{A_E,i,g}, \cH^{i}_g(A_E)
\bigr)= (-1)^{i}q^{d_E}Y_E(g)\]
where the class function $Y_E\colon G_{\text{uni}}^F\rightarrow 
\overline{\Q}_\ell$ is defined by 
\[ Y_E(g)=\left\{\begin{array}{cl} \mbox{Tr}(\psi_g,\cE_g) & \quad
\mbox{if $g \in C^F$}, \\ 0 & \quad \mbox{otherwise};\end{array}\right.\]
see \cite[24.2.3]{L2e}. In particular, the values of $Y_E$ are
algebraic integers. Since $\dim C$ is an even number, we obtain that 
\[ R_{E}(g)=(-1)^{\dim T_0}\chi_{A_E}(g)=q^{d_E}Y_E(g)
\qquad \mbox{for all $g\in C^F$}.\]
Since the values of $R_E$ are rational numbers (see \S \ref{sub21}), we 
conclude that
\[ Y_E(g)\in\Z \qquad \mbox{for all $E\in\Irr(W)^\gamma$ and 
$g\in G_{\text{uni}}^F$}.\]
The $Y$-functions $\{Y_E\mid E\in \Irr(W^\gamma)\}$ are linearly 
independent by \cite[24.2.7]{L2e}. 

\subsection{The coefficients $p_{E',E}$} \label{sub26} Having established 
the above framework, Lusztig \cite[Theorem~24.4]{L2e} shows that we have 
unique equations 
\[ R_{E}|_{G_{\text{uni}}^F}=\sum_{E'\in \Irr(W)^\gamma}  q^{d_E}\,
p_{E',E} Y_{E'}\qquad\mbox{for all $E\in \Irr(W)^\gamma$},\]
where the coefficients $p_{E',E}\in \overline{\Q}_\ell$ are determined by a 
purely combinatorial algorithm which we will consider in more detail
in Section~\ref{sec3}. Note that the hypotheses of 
\cite[Theorem~24.4]{L2e} (``cleanness'') are always satisfied by the main 
result of \cite{L10}. (Since we are only dealing with Green functions 
of $G^F$, and not with generalised Green functions, it would actually be 
sufficient to refer to \cite[\S 3]{aver} instead of \cite{L10}.)
By \cite[24.5.2]{L2e}, we have
\[ p_{E',E}\in \Z \qquad \mbox{for all $E,E'\in \Irr(W)^\gamma$}.\]
Furthermore, by \cite[24.2.10, 24.2.11]{L2e}, we have 
\[ p_{E,E}=1 \qquad\mbox{and}\qquad p_{E',E}=0 \quad \mbox{if $E'\neq E$ 
and $d_{E'}\geq d_E$}.\] 
Consequently, for a suitable ordering of $\Irr(W)^\gamma$, the matrix of 
coefficients $(p_{E',E})$ will be triangular with $1$ along the diagonal
(see also Section~\ref{sec3}). 

\medskip
\textit{Thus, the whole problem of computing the Green 
functions $Q_w$ of $G^F$ is reduced to the determination of the functions 
$\{Y_E\mid E \in \Irr(W)^\gamma\}$} (cf.\ Shoji \cite[1.3]{S6a}). 

\medskip
As in \cite[1.3]{S6a}, the above discussion also applies, with additional
technical complications, to the generalized Green functions defined in 
\cite[\S 8.3]{L2b}, but in this article we restrict ourselves to the 
``ordinary'' Green functions $Q_w$.

\section{Evaluating the $Y$-functions} \label{sec2}

Combining and summarizing the formulae in Section~\ref{sec1}, we can state 
the following result about the values of the Green functions of $G^F$. 

\begin{prop} \label{sumup} Let $w\in W$ and $u \in G^F$ be unipotent. Then 
\[ Q_w(u) =\sum_{E',E\in \Irr(W)^\gamma} \operatorname{Tr}(\sigma_E{\circ} w,
E) q^{d_E} p_{E',E} Y_{E'}(u),\]
where $\sigma_E,d_E,Y_{E'},p_{E',E}$ are defined in \S \ref{sub21}, 
\S \ref{sub24}, \S \ref{sub25}, \S \ref{sub26}, respectively.
\end{prop}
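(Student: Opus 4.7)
The proof is a bookkeeping exercise that combines the two key identities already assembled in Section~\ref{sec1}. The plan is to substitute Lusztig's triangular expansion of the almost characters restricted to unipotent elements into the character-theoretic expansion of $Q_w$ in terms of almost characters, and then merge the two finite sums into a single double sum.

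First, for any $w\in W$ and $u\in G_{\text{uni}}^F$, I would invoke the identity recalled in \S\ref{sub21}:
\[ Q_w(u)=\sum_{E\in \Irr(W)^\gamma} \operatorname{Tr}(\sigma_E\circ w,E)\,R_E(u). \]
Then, for each $E\in \Irr(W)^\gamma$, I would insert Lusztig's formula from \S\ref{sub26} for the restriction of the almost character $R_E$ to the unipotent elements:
\[ R_E(u)=\sum_{E'\in \Irr(W)^\gamma} q^{d_E}\,p_{E',E}\,Y_{E'}(u). \]
Substituting the second equation into the first and rewriting as a single sum indexed by pairs $(E',E)\in \Irr(W)^\gamma\times\Irr(W)^\gamma$ yields the claimed formula.

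There is essentially no obstacle to overcome: both input identities have been recorded verbatim in Section~\ref{sec1}, and the substitution is formally trivial, since both sums are finite sums of class functions on $G_{\text{uni}}^F$, so no well-definedness or convergence issues arise. The only mild point worth noting is that the prefactor $q^{d_E}$ carries the outer index $E$ coming from the almost character being expanded, not the inner index $E'$ coming from the $Y$-function; thus, when the two sums are merged, the factors $\operatorname{Tr}(\sigma_E\circ w,E)$ and $q^{d_E}$ stay attached to the same index $E$, exactly as in the stated formula.
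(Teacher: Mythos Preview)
Your proposal is correct and follows exactly the paper's own argument: substitute the expansion of $R_E|_{G_{\text{uni}}^F}$ from \S\ref{sub26} into the expression for $Q_w(u)$ in terms of almost characters from \S\ref{sub21}. Your remark about the index on the factor $q^{d_E}$ is accurate and the only thing requiring any attention.
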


\begin{proof} By \S \ref{sub21}, we can express $Q_w(u)$ as a linear
combination of $R_E(u)$, for various $E\in \Irr(W)^\gamma$. By
\S \ref{sub26}, we can express each term $R_E(u)$ as a linear combination
of $Y_{E'}(u)$, for various $E'\in \Irr(W)^\gamma$. 
\end{proof}
 
In this section, we address the further evaluation of the terms $Y_{E'}(u)$. 
As we will use results from Shoji \cite{S2}, \cite{S3} we will assume from 
now on that the Frobenius map $F\colon G\rightarrow G$ is given by 
\[ F=\tilde{\gamma}\circ F_p^m=F_p^m \circ \tilde{\gamma}\qquad
(m\geq 1)\]
where $\tilde{\gamma}\colon G \rightarrow G$ is an automorphism of finite
order leaving $T_0,B_0$ invariant, and $F_p\colon G \rightarrow G$ is
a Frobenius map corresponding to a split $\F_p$-rational structure,
such that $F_p(t)=t^p$ for all $t\in T_0$. Note that $F_p$ acts trivially
on $W$ and that $\tilde{\gamma}$ induces an automorphism of $W$ which is 
just the automorphism $\gamma\colon W\rightarrow W$ induced by $F$ considered
earlier. Thus, if $G$ is semisimple, then $G^F$ is an untwisted or twisted 
Chevalley group, as in Steinberg \cite[\S 11.6]{St68}. 

\begin{rem} \label{rem20} It is known that all unipotent classes of $G$ 
are $F_p$-stable (since, in each case, representatives of the classes are 
known which lie in $G^{F_p}=G(\F_p)$; see, e.g., Liebeck--Seitz \cite{LiSe}).
Let $C$ be an $F$-stable unipotent class. We shall also make the following 
assumption.
\begin{itemize}
\item[($\clubsuit$)] There exists an element $u_0\in C^F$ such that $F$ 
acts trivially on the finite group of components $A(u_0):=
C_G(u_0)/C_G^\circ(u_0)$.
\end{itemize}
If ($\clubsuit$) holds, then there is a bijective correspondence between 
the conjugacy classes of $A(u_0)$ and the conjugacy classes of $G^F$ that 
are contained in the set $C^F$ (see, e.g., \cite[Lemma~2.12]{LiSe}). For 
$a\in A(u_0)$, an element in the corresponding $G^F$-conjugacy class is
given by $u_a=hu_0h^{-1}$ where $h\in G$ is such that $h^{-1}F(h) \in 
C_G(u_0)$ maps to $a$ under the natural homomorphism $C_G(u_0)\rightarrow 
A(u_0)$. (The existence of $h$ is guaranteed by Lang's Theorem; note that
$h$ is not unique but $u_a=hu_0h^{-1}$ is well-defined up to $G^F$-conjugacy.) 
\end{rem}

Let $E\in \Irr(W)^\gamma$ and $\iota_G(E)=(C,\cE)\in \cN_G$. As in 
\S \ref{sub25}, we have $F(C)=C$ and $F^*\cE\cong \cE$. Furthermore, there 
is a certain isomorphism $\psi\colon F^* \cE\stackrel{\sim} {\rightarrow}
\cE$ which induces a map of finite order $\psi_g \colon \cE_g\rightarrow 
\cE_g$ for each $g\in C^F$. Now let us fix an element $u_0\in C^F$ as in 
($\clubsuit$), such that $F$ acts trivially on $A(u_0)$. 

\begin{lem}[Cf.\ Lusztig \protect{\cite[19.7]{Ldisc4}}] \label{lu11} In 
the above setting, let $u_0\in C^F$ be such that ($\clubsuit$) holds. There 
is a natural $A(u_0)$-module structure on the stalk $\cE_{u_0}$. We have 
$\cE_{u_0} \in \Irr(A(u_0))$ and the map $\psi_{u_0}\colon \cE_{u_0} 
\rightarrow \cE_{u_0}$ is given by scalar mulplication with a sign 
$\delta_E=\pm 1$. Furthermore, $Y_E(u_a)= \delta_E\operatorname{Tr}(a,
\cE_{u_0})$ for all $a\in A(u_0)$. 
\end{lem}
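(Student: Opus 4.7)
The plan is to exploit the standard equivalence between $G$-equivariant irreducible $\overline{\Q}_\ell$-local systems on the homogeneous space $C$ and irreducible representations of the component group $A(u_0)$, and then to read off the Frobenius action at the stalk $\cE_{u_0}$ through this dictionary. Writing $C=G{\cdot}u_0\cong G/C_G(u_0)$, the equivariance datum of $\cE$ provides, for every $x\in C_G(u_0)$, an automorphism of the stalk $\cE_{u_0}$, depending functorially on $x$. Since $\cE$ is a local system and $C_G^\circ(u_0)$ is connected, this action of $C_G(u_0)$ is trivial on $C_G^\circ(u_0)$ and hence factors through a representation $\rho$ of $A(u_0)$. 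Under the standard equivalence of categories between $G$-equivariant $\overline{\Q}_\ell$-local systems on $C$ and finite-dimensional representations of $A(u_0)$, $\cE$ corresponds to $(\cE_{u_0},\rho)$, and irreducibility is preserved; since $\cE$ is irreducible by the Springer setup of \S \ref{sub24}, this proves assertions (1) and (2).

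Next I would pin down $\psi_{u_0}$. The isomorphism $\psi\colon F^*\cE\xrightarrow{\sim}\cE$ is $G$-equivariant, where $F^*\cE$ carries the pulled-back structure in which $g\in G$ acts through the action of $F(g)$ on $\cE$. Passing to the stalk at the $F$-fixed point $u_0$, this compatibility reads
\[ \psi_{u_0}\circ \rho(x)=\rho(F(x))\circ \psi_{u_0}\qquad\mbox{for all $x\in C_G(u_0)$.} \]
Assumption ($\clubsuit$) makes $F$ act trivially on $A(u_0)$, so $\rho(F(x))=\rho(x)$, whence $\psi_{u_0}$ commutes with the full $A(u_0)$-action. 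Schur's lemma, applied to the irreducible module $(\cE_{u_0},\rho)$, yields $\psi_{u_0}=\delta_E\cdot\mbox{id}_{\cE_{u_0}}$ for a scalar $\delta_E\in \overline{\Q}_\ell^{\times}$. By \S \ref{sub25}, $\psi_{u_0}$ has finite order, so $\delta_E$ is a root of unity; combining this with the integrality $Y_E(u_0)=\delta_E\dim\cE_{u_0}\in\Z$ (again \S \ref{sub25}) and $\dim\cE_{u_0}\in\Z_{>0}$ forces $\delta_E\in\Q$, and therefore $\delta_E\in\{+1,-1\}$.

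Finally, to deduce the formula for $Y_E(u_a)$, I would use $u_a=hu_0h^{-1}$ with $c:=h^{-1}F(h)\in C_G(u_0)$ representing $a$ (Remark \ref{rem20}). The equivariance of $\cE$ supplies, for each $h\in G$, an isomorphism $\alpha_h\colon \cE_{u_0}\xrightarrow{\sim}\cE_{u_a}$ depending cocycle-compatibly on $h$. Combining the cocycle identity, the relation $F(h)=hc$, and the $G$-equivariance of $\psi$ expressed at the fibre over $u_0$, one identifies the transported operator $\alpha_h^{-1}\,\psi_{u_a}\,\alpha_h\in\mbox{End}(\cE_{u_0})$ with $\delta_E\cdot\rho(a)$. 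Taking traces then gives
\[ Y_E(u_a)=\mbox{Tr}(\psi_{u_a},\cE_{u_a})=\delta_E\,\mbox{Tr}(a,\cE_{u_0}), \]
as claimed. The main obstacle is precisely the careful bookkeeping in this last step: the overall strategy is transparent, but one must track the direction of the equivariance isomorphisms $\alpha_h$ and the cocycle conventions with some care in order to ensure that the transported Frobenius becomes exactly $\delta_E\cdot\rho(a)$ (and not $\delta_E\cdot\rho(a^{-1})$). This is essentially the computation carried out by Lusztig in \cite[19.7]{Ldisc4}, which the present lemma just transcribes into the framework of Section~\ref{sec1}.
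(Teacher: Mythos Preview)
Your proposal is correct and follows essentially the same approach as the paper: both arguments invoke the $A(u_0)$-module structure on $\cE_{u_0}$ coming from equivariance, use Schur's lemma together with ($\clubsuit$) to see that $\psi_{u_0}$ is a scalar, obtain $\delta_E=\pm 1$ from finite order plus rationality/integrality of $Y_E$ (equivalently of $R_E$), and then read off $Y_E(u_a)=\delta_E\operatorname{Tr}(a,\cE_{u_0})$ by transport of structure. The only difference is expository: the paper cites \cite[19.7]{Ldisc4} for the module structure and the trace formula, whereas you unpack the mechanism explicitly (and correctly flag the cocycle bookkeeping as the only delicate point).
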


Note that $\mbox{Tr}(a,\cE_{u_0})$ is just an entry in the ordinary
character table of $A(u_0)$. In particular, if $a=1$, then $u_1$ is 
$G^F$-conjugate to $u_0$ and so $Y_E(u_0)=\delta_E\dim \cE_{u_0}$. 

\begin{proof} The $A(u_0)$-module structure on $\cE_{u_0}$ is explained
in the proof of \cite[19.7]{Ldisc4}; this also shows that $\psi_{u_0}
\circ a=a\circ \psi_{u_0}\colon \cE_{u_0}\rightarrow \cE_{u_0}$ 
for all $a\in A(u_0)$. (Recall from ($\clubsuit$) that $F$ acts trivially 
on $A(u_0)$.) Since $\cE_{u_0} \in \Irr(A(u_0))$, we conclude that 
$\psi_{u_0}$ acts as a scalar times the identity on $\cE_{u_0}$; let us 
denote this scalar by $\delta_E\in \overline{\Q}_\ell$. Then, for any 
$g\in C^F$, we have $Y_E(g)=\delta_E \mbox{Tr}(a,\cE_{u_0})$ where 
$a\in A(u_0)$ is such that $g$ is $G^F$-conjugate to $u_a$; see 
\cite[19.7]{Ldisc4}. Since $\psi_{u_0}\colon \cE_{u_0} \rightarrow 
\cE_{u_0}$ is a map of finite order, the scalar $\delta_E$ is a root of 
unity. Since the values of the almost characters $R_{E}$ are 
in $\Q$ (see \S \ref{sub21}), and since $R_{E}(u_1)=
q^{d_E}Y_E(u_1)=q^{d_E}\delta_E\mbox{Tr}(1,\cE_{u_0})$ (see \S \ref{sub25}), 
we conclude that we also have $\delta_E\in \Q$. Hence, we finally see 
that $\delta_E=\pm 1$.
\end{proof}

\textit{Thus, the problem of computing the Green functions $Q_w$ 
is further reduced to the determination of the signs $\delta_E=
\pm 1$ for $E \in \Irr(W)^\gamma$} (cf.\ Shoji \cite[1.3, p.~161]{S6a}). 

\begin{exmp} \label{beysp} Let $G$ be of type $E_8$ and $p>5$. Let $C$ be 
the unipotent class denoted by $D_8(a_3)$ in Mizuno \cite{Miz2}, or by 
$E_8(b_6)$ in Carter \cite[p.~407]{C2}. We have $\dim C=28$ and, 
up to conjugation within $G^F$, there is a unique $u_0\in C^F$ such that
$|C_G(u_0)^F|=6q^{28}$; see \cite[p.~455]{Miz2}. We have $A(u_0)\cong \fS_3$ 
and $F$ acts trivially on $A(u_0)$. Let $E\in \Irr(W)$ be the irreducible 
representation denoted by $\phi_{840,13}$ in \cite[\S 13.2]{C2}, or by $840_z$
in \cite[4.13.1]{L1}. Then $\iota_G(E)=(C,\cE)$ where the irreducible 
$A(u_0)$-module $\cE_{u_0}$ corresponds to the sign representation of 
$\fS_3$; see \cite[p.~432]{C2}. It is shown by Beynon--Spaltensetin
\cite[\S 3, Case~5]{BeSp} that 
\[ \delta_E=\left\{\begin{array}{rl} 1 & \quad \mbox{if $q\equiv 1 \bmod 
3$},\\ -1 & \quad \mbox{if $q\equiv -1 \bmod 3$}.\end{array}\right.\]
In particular, there do exist cases in which $\delta_E=-1$.
\end{exmp}

Returning to the general setting, the following corollary interprets the signs
$\delta_E=\pm 1$ somewhat more directly in terms of the character sheaves
$A_E$ in \S \ref{sub23}.

\begin{cor} \label{lu11a} Let $E\in\Irr(W)^\gamma$ and $\iota_G(E)=(C,\cE)
\in\cN_G$. Let $u_0\in C^F$ be as in ($\clubsuit$). Then the isomorphism 
$\phi_{A_E}\colon F^*A_E\stackrel{\sim}{\rightarrow} A_E$ in 
\S \ref{sub23} induces the scalar multiplication by $\delta_E\,q^{d_E}$ on 
the stalk $\cH_{u_0}^i(A_E)$, where $i=-\dim C-\dim T_0$. 
\end{cor}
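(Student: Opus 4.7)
The plan is to unwind the normalisations from \S\ref{sub23}--\S\ref{sub25} and then apply Lemma \ref{lu11}. First I would use \S\ref{sub24}: for $i=-\dim C-\dim T_0$, we have $\cH^j(A_E)|_C=0$ for $j\neq i$ and $\cH^i(A_E)|_C\cong \cE$. Consequently the stalk $\cH^i_{u_0}(A_E)$ is canonically identified with $\cE_{u_0}$, and the map that $\phi_{A_E}\colon F^*A_E\stackrel{\sim}{\rightarrow} A_E$ induces on the $i$-th cohomology sheaf over $C$ is precisely the isomorphism $F^*\cE\stackrel{\sim}{\rightarrow}\cE$ introduced in \S\ref{sub25}.

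By definition in \S\ref{sub25}, this latter isomorphism equals $q^{d_E}\psi$ where $\psi\colon F^*\cE\stackrel{\sim}{\rightarrow}\cE$ is an isomorphism of finite order. Since $u_0\in C^F$ satisfies $F(u_0)=u_0$, the induced linear map $\phi_{A_E,i,u_0}\colon \cH^i_{F(u_0)}(A_E)\rightarrow \cH^i_{u_0}(A_E)$ is a genuine endomorphism of $\cH^i_{u_0}(A_E)\cong \cE_{u_0}$, and under this identification it equals $q^{d_E}\psi_{u_0}$. Lemma \ref{lu11}, whose hypothesis ($\clubsuit$) is in force, then tells us that $\psi_{u_0}$ acts on the irreducible $A(u_0)$-module $\cE_{u_0}$ as scalar multiplication by $\delta_E=\pm 1$. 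Composing the two scalars yields the claim: $\phi_{A_E,i,u_0}$ is scalar multiplication by $\delta_E\, q^{d_E}$ on $\cH^i_{u_0}(A_E)$.

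There is no substantive obstacle; the corollary is essentially a repackaging of facts already assembled above. The only point deserving some care is that the factor $q^{d_E}$ appears exactly once: it enters through the definition of $\psi$ in \S\ref{sub25} as the ``renormalised'' map extracted from $\phi_{A_E}$ by stripping off the expected $q^{d_E}$-eigenvalue, and it should not be reintroduced when passing from the sheaf $\cH^i(A_E)|_C$ to its stalk at $u_0$. Once this bookkeeping is respected, the proof reduces to reading off the two successive scalars.
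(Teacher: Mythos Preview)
Your proof is correct and follows essentially the same approach as the paper's own proof, which simply states that the corollary is ``just a reformulation of Lemma~\ref{lu11}, noting that $\cH^i(A_E)|_C\cong \cE$ and $\cH^j(A_E)|_C=0$ for $j\neq i$; see \S \ref{sub24}.'' You have merely made explicit the bookkeeping that the paper leaves to the reader, correctly tracking the factor $q^{d_E}$ through the normalisation of $\psi$ in \S\ref{sub25} and then invoking Lemma~\ref{lu11} for the scalar $\delta_E$.
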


\begin{proof} This is just a reformulation of Lemma~\ref{lu11}, noting
that $\cH^i(A_E)|_C\cong \cE$ and $\cH^j(A_E)|_C=0$ for $j\neq i$; see
\S \ref{sub24}.
\end{proof}

\begin{rem} \label{replace} Let $d\geq 1$ be the order of the automorphism 
$\gamma\colon W\rightarrow W$; we set 
\[ \cM:=\{r\in \Z_{\geq 1}\mid r \equiv 1 \bmod d\}.\]
Let $E\in \Irr(W)^\gamma$ and $\iota_G(E)=(C,\cE)$ where $C$ is
$F$-stable and $F^*\cE\cong \cE$. Let $u_0\in C^F$ be such that $F$ acts 
trivially on $A(u_0)$; by Lemma~\ref{lu11}, we have a corresponding sign 
$\delta_E=\pm 1$ such that
\[ R_{E}(u_0)=q^{d_E}\delta_E \dim \cE_{u_0} \qquad\mbox{(see
\S \ref{sub25})}.\]
Now let $r\in \cM$ and replace $F$ by $F^r$. Thus, since $r\equiv 1 \bmod d$,
the automorphism of $W$ induced by $F^r$ is again given by~$\gamma$. 
Hence, we can use the chosen map $\sigma_E \colon E\rightarrow E$ (see 
\S \ref{sub21}) to define a corresponding almost character of $G^{F^r}$,
which we denote by $R_{E}^{(r)}$. Finally, we still have $u_0\in C^{F^r}$ 
and $F^r$ acts trivially on $A(u_0)$. Hence, we also have a corresponding 
sign $\delta_E^{(r)}=\pm 1$ as in Lemma~\ref{lu11}, such that
\[ R_{E}^{(r)}(u_0)=q^{d_Er}\delta_E^{(r)} \dim \cE_{u_0}.\]
The following result relates $\delta_E$ and $\delta_E^{(r)}$.
\end{rem}

\begin{thm} \label{thmsplit} With the above notation, we have
$\delta_E^{(r)}=(\delta_E)^r$ for all $r\in \cM$.
\end{thm}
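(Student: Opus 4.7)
The plan is to interpret both $\delta_E$ and $\delta_E^{(r)}$ via Corollary~\ref{lu11a}, and then to identify the canonical isomorphism $\phi_{A_E}^{(r)}\colon(F^r)^*A_E\stackrel{\sim}{\rightarrow}A_E$ used to define $\delta_E^{(r)}$ with the $r$-fold iterate of the corresponding isomorphism $\phi_{A_E}$ attached to $F$. Granting such an identification, the fact that $u_0$ is already $F$-fixed makes the stalk computation immediate, and the equality $\delta_E^{(r)}=\delta_E^r$ drops out.

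Concretely, first I would form the composite
\[ \tilde{\phi}_{A_E}^{(r)}:=\phi_{A_E}\circ F^*\phi_{A_E}\circ (F^2)^*\phi_{A_E}\circ\cdots\circ (F^{r-1})^*\phi_{A_E}\colon (F^r)^*A_E\stackrel{\sim}{\rightarrow} A_E \]
and claim that $\tilde{\phi}_{A_E}^{(r)}=\phi_{A_E}^{(r)}$. The induction complex $K$ and its decomposition $K\cong\bigoplus_E V_E\otimes A_E$ are independent of the Frobenius, while the canonical isomorphism $\varphi^{(r)}\colon(F^r)^*K\stackrel{\sim}{\rightarrow}K$ attached to $F^r$ is, by functoriality of geometric induction, the analogous $r$-fold iterate $\varphi\circ F^*\varphi\circ\cdots\circ(F^{r-1})^*\varphi$. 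Since $V_E$ is a constant sheaf, on the $E$-summand the isomorphism $\varphi$ decomposes as $\psi_E\otimes\phi_{A_E}$ with $\psi_E=\sigma_E$ under $V_E\cong E$ (the normalisation of \S\ref{sub23}), and iterating $r$ times gives $\psi_E^r\otimes\tilde{\phi}_{A_E}^{(r)}$. Now $r\equiv 1\bmod d$ together with $\sigma_E^d=\mathrm{id}_E$ yields $\sigma_E^r=\sigma_E$; hence $\psi_E^r$ is again $\sigma_E$, which is precisely the normalisation defining $\phi_{A_E}^{(r)}$ relative to $F^r$ (using that $F^r$ induces $\gamma^r=\gamma$ on $W$). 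By the uniqueness of that normalisation, the claim follows.

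For the last step, I would evaluate the stalk action of $\tilde{\phi}_{A_E}^{(r)}$ at $u_0$, where $i=-\dim C-\dim T_0$. Because $u_0\in C^F\subseteq C^{F^r}$, each of the $r$ factors $(F^j)^*\phi_{A_E}$ induces on $\cH^i_{u_0}(A_E)$ the same map as $\phi_{A_E}$ itself, which by Corollary~\ref{lu11a} is scalar multiplication by $\delta_E q^{d_E}$. Composing $r$ such maps gives multiplication by $(\delta_E q^{d_E})^r=\delta_E^r\,q^{d_E r}$. On the other hand, applying Corollary~\ref{lu11a} to $F^r$ in place of $F$ identifies this scalar with $\delta_E^{(r)} q^{d_E r}$. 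Comparing yields $\delta_E^{(r)}=\delta_E^r$.

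The only real obstacle is the identification $\tilde{\phi}_{A_E}^{(r)}=\phi_{A_E}^{(r)}$; this rests on the observation that the normalisation data is genuinely the same for $F$ and $F^r$ when $r\in\cM$, namely, the same automorphism $\gamma=\gamma^r$ of $W$ and the same choice $\sigma_E$ (since $\sigma_E^r=\sigma_E$). Once this identification is in place, the remaining computation is purely formal.
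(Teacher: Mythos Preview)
Your proposal is correct and follows essentially the same approach as the paper: form the $r$-fold iterate of $\phi_{A_E}$, verify via the decomposition of $K$ and the relation $\psi_E^r=\psi_E$ (using $r\equiv 1\bmod d$) that this iterate coincides with the normalised isomorphism for $F^r$, and then read off the scalar at the stalk over $u_0\in G^F$ using Corollary~\ref{lu11a}. The paper cites \cite[2.18.1]{S2} and \cite[3.7]{pbad} for the two identifications you sketch (that $\varphi^{(r)}$ is the iterate of $\varphi$, and that $\psi_E^{(r)}=\psi_E^r$), but the structure of the argument is identical.
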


\begin{proof} We use the interpretation of $\delta_E$ in 
Corollary~\ref{lu11a}. Starting with the isomorphism $\phi:=\phi_{A_E} 
\colon F^*A_E \stackrel{\sim}{\rightarrow} A_E$ in 
\S \ref{sub23}, we obtain natural isomorphisms
\[ (F^*)^{j-1}(\phi)\colon (F^*)^jA_E \stackrel{\sim}{\rightarrow} 
(F^*)^{j-1}A_E \qquad \mbox{for $1\leq j \leq r$}.\]
These give rise to an isomorphism 
\[\tilde{\phi}^{(r)}:=\phi\circ F^*(\phi) \circ \ldots \circ 
(F^*)^{r-1}(\phi)\colon (F^*)^rA_E\stackrel{\sim}{\rightarrow} A_E.\] 
We also have a canonical isomorphism $(F^*)^rA_E\cong (F^r)^* A_E$
which, finally, induces an isomorphism
\[\phi^{(r)} \colon (F^r)^*A_E\stackrel{\sim}{\rightarrow} A_E\qquad
\mbox{(see \cite[1.1]{S2})}.\]
We denote the corresponding characteristic function of $A_E$ by 
$\chi^{(r)} \colon G^{F^r}\rightarrow \overline{\Q}_\ell$. Thus, we have 
\[ \chi^{(r)}(g)=\sum_i (-1)^i\mbox{Tr}\bigl(\phi_{i,g}^{(r)},\cH_g^i
(A_E)\bigr) \qquad\mbox{for $g\in G^{F^r}$}.\]
Now assume that $g$ is an element in $G^{F}$, and not just in $G^{F^r}$. 
Then we have 
\[ \phi^{(r)}_{i,g}=(\phi_{i,g})^r \colon \cH_g^i(A_E)
\rightarrow \cH_g^i(A_E)\qquad \mbox{for all $i$};\] 
see \cite[1.1]{S2}. If we take $g=u_0$ (the chosen element in $C^{F}
\subseteq C^{F^r}$) and let $i=-\dim C-\dim T_0$, then $\phi_{i,u_0}\colon 
\cH_{u_0}^i(A_E) \rightarrow \cH_{u_0}^i(A_E)$ is given by scalar 
multiplication with $\delta_E\,q^{d_E}$; see Corollary~\ref{lu11a}. So we 
conclude that 
\[ \phi^{(r)}_{i,u_0}=\mbox{ scalar multiplication by $(\delta_E\, q^{d_E})^r
=(\delta_E)^rq^{d_Er}$ on $\cH_{u_0}^i(A_E)$}.\]
Hence, again in view of Corollary~\ref{lu11a}, it remains to show that
the isomorphism $\phi^{(r)}\colon (F^r)^*A_E\stackrel{\sim}{\rightarrow} 
A_E$ constructed above is the particular isomorphism singled out 
in the discussion in \S \ref{sub23}. But this has already been checked
essentially by Shoji in \cite[2.18.1]{S2}. For this purpose, we have
to consider once more the decomposition 
\[ K\cong \bigoplus_{E \in \Irr(W)} V_E \otimes A_E; \qquad
\mbox{(see \S \ref{sub23})}.\]
As above, starting with the isomorphism $\varphi \colon 
F^*K\stackrel{\sim}{\rightarrow} K$, we obtain a natural isomorphism 
$\varphi^{(r)} \colon (F^r)^*K \stackrel{\sim}{\rightarrow} K$. Then we 
have a unique linear map $\psi_E^{(r)} \colon V_E\rightarrow V_E$ such 
that $\phi^{(r)}\otimes \psi_E^{(r)}$ corresponds to $\varphi^{(r)}$ under
the above direct sum decomposition.  By an argument completely analogous
to that in \cite[3.7]{pbad}, we see that $\psi_E^{(r)}=(\psi_E)^r$,
where $\psi_E \colon V_E\rightarrow V_E$ is determined by $\varphi$
and $\phi_{A_E}$ as in \S \ref{sub23}. Since $r \equiv 1\bmod d$ and $\psi_E$
has order dividing~$d$, we conclude that $\psi_E^r=\psi_E$. Thus, 
$\phi^{(r)}$ also satisfies the requirements in \S \ref{sub23}.
\end{proof}

If $\tilde{\gamma}=\mbox{id}_G$ and $F=F_p^m$, then Theorem~\ref{thmsplit} 
shows that the determination of $\delta_E$ is reduced to the case
where $m=1$. This is exploited in \cite{small} to compute the values of 
Green functions for groups of exceptional type in small characteristic. 

\begin{rem} \label{remtwist} Assume that $F=\tilde{\gamma}\circ F_0=
F_0\circ \tilde{\gamma}$ where $F_0:=F_p^m$ as above and 
$\tilde{\gamma} \colon G \rightarrow G$ is non-trivial. Let 
$E\in \Irr(W)^\gamma$ and $\iota_G(E)=(C,\cE)\in \cN_G$ where $C$ is 
$F$-stable and $F^*\cE\cong \cE$. Assume that there exists an element 
$u_0\in C$ such that
\begin{equation*}
 F_p(u_0)=u_0=\tilde{\gamma}(u_0) \quad \mbox{and} \quad \mbox{$F,F_0$ act
trivially on $A(u_0)$}.\tag{$\clubsuit^\prime$}
\end{equation*}
Then, by Lemma~\ref{lu11}, we have signs $\delta_E=\pm 1$ (with respect to 
$F$) and $\delta_E^\circ=\pm 1$ (with respect to $F_0=F_p^m$). We claim that
\[ \delta_E\delta_E^\circ=\pm 1 \mbox{ does not depend on $m$}.\]
(This remark is used, for example, in the determination of Green functions 
for groups of type ${^2\!E}_6$ in \cite[\S 7]{small}.) A proof is as 
follows. As noted in \cite[2.17]{S2}, we have $\tilde{\gamma}^* A_E\cong 
A_E$. Let $\nu_{A_E}\colon \tilde{\gamma}^* A_E \stackrel{\sim}{\rightarrow} 
A_E$ be an isomorphism. Then $\nu_{A_E}$ induces linear maps $\nu_{A_E,i,u}
\colon \cH_{u_0}^i(A_E)\rightarrow \cH_{u_0}^i(A_E)$ for all~$i$. Let 
$\phi_{A_E}^\circ \colon F_0^*A_E \stackrel{\sim}{\rightarrow} A_E$ be an 
isomorphism as in \S \ref{sub23}. Since $F_0^*(\tilde{\gamma}^*A) \cong 
(F_0\circ \tilde{\gamma})^*A=F^*A$, we obtain 
\[ \phi_{A_E}:=\phi_{A_E}^\circ \circ F_0^*(\nu_{A_E})\colon F^*A_E 
\stackrel{\sim}{\rightarrow} A_E\]
where $F_0^*(\nu_{A_E})\colon F_0^*(\tilde{\gamma}^*A_E) 
\stackrel{\sim}{\rightarrow} F_0^*A_E$ is induced by $\tilde{\gamma}^*
A_E\cong A_E$. Replacing $\nu_{A_E}$ by a scalar multiple if necessary,
we can assume that the above isomorphism satisfies the requirements in
\S \ref{sub23} (see also \cite[2.1, 2.17]{S2}). Now consider stalks at 
$u_0\in C$. Since $F_p(u_0)=u_0=\tilde{\gamma}(u_0)$, the above map
$\nu_{A_E,i,u_0}$ agrees with the map induced by $(F_p^m)^*(\nu_{A_E})$
on $\cH_{u_0}^i(A_E)$. Hence, we obtain that
\[ \phi_{A_E,i,u_0}=\phi_{A_E,i,u_0}^\circ \circ \nu_{A_E,i,u_0} \colon 
\cH_{u_0}^i(A_E)\rightarrow \cH_{u_0}^i(A_E).\]
Now let $i=-\dim C-\dim T_0$. By Corollary~\ref{lu11a},
$\phi_{A_E,i,u_0}$ is given by scalar multiplication with
$\delta_E\,q^{d_E}$; similarly, $\phi_{A_E,i,u_0}^\circ$ is given by scalar 
multiplication with $\delta_E^\circ\,q^{d_E}$. Hence, $\nu_{A_E,i,u_0}$ must 
also be given by scalar multiplication with a sign 
$\delta_E^{\tilde{\gamma}}=\pm 1$, such that $\delta_E=
\delta_E^{\tilde{\gamma}}\, \delta_E^\circ$. Note that 
$\delta_E^{\tilde{\gamma}}$ only depends on $\tilde{\gamma}$.
\end{rem}

\section{Determining the coefficients $p_{E',E}$} \label{sec3}

We keep the notation from the previous section. Taking into account
the information  in \S \ref{sub26} and the orthogonality relations
for Green functions, Lusztig \cite[\S 24.4]{L2e} has described a purely 
combinatorial algorithm for determining the coefficients $p_{E',E}$, which
modifies and simplifies an earlier algorithm of Shoji \cite[\S 5]{S1}.

We say that $w,w'\in W$ are $\gamma$-conjugate, and write $w\sim_\gamma w'$, 
if there exists some $x\in W$ such that $w'=x^{-1}w\gamma(x)$. This defines 
an equivalence relation on $W$; the equivalence classes are called
$\gamma$-conjugacy classes. For $w\in W$, we set 
\[ C_W^\gamma(w):=\{x\in W\mid  w=x^{-1}w\gamma(x)\}.\]
Then the size of the $\gamma$-conjugacy class of $w$ is given by
the index of $C_W^\gamma(w)$ in $W$. By \cite[Prop.~3.3.6]{C2} and 
\cite[Prop.~7.6.2]{C2}, the Green functions of $G^F$ satisfy the following 
orthogonality relations, for any $w,w'\in W$:
\[ \sum_{g \in G_{\text{uni}}^F} Q_w(g)Q_{w'}(g)=\left\{
\begin{array}{cl} [G^F:T_w^F] |C_W^\gamma(w)| & \quad\mbox{if 
$w \sim_\gamma w'$},\\ 0 & \quad \mbox{otherwise}.\end{array}\right.\]
We define the matrix $\tilde{\Omega}=(\tilde{\omega}_{E',E})_{E',E 
\in \Irr(W)^\gamma}$ where 
\[\tilde{\omega}_{E',E}:=\frac{1}{|W|} \sum_{w\in W} [G^F:T_w^F]\, 
\mbox{Tr}(\sigma_{E'}{\circ} w, E') \mbox{Tr}(\sigma_E{\circ} w,E)\in\Q;\]
here, $T_w\subseteq G$ denotes an $F$-stable maximal torus obtained from 
$T_0$ by twisting with~$w$ and the maps $\sigma_E\colon E\rightarrow E$, 
$\sigma_{E'}\colon E'\rightarrow E'$ are as in \S \ref{sub21}. As in 
\cite[3.19]{cbms}, we have the formula
\[ \sum_{E\in \Irr(W)^\gamma} \mbox{Tr}(\sigma_E{\circ} w)
\mbox{Tr}(\sigma_E{\circ} w',E)=\left\{\begin{array}{cl} |C_W^\gamma(w)| & 
\quad \mbox{if $w\sim_\gamma w'$},\\ 0 & \quad \mbox{otherwise}.
\end{array}\right.\]
Note also that, for $E\in \Irr(W)^\gamma$, the function $w\mapsto 
\mbox{Tr}(\sigma_E{\circ} w,E)$ is constant on $\gamma$-conjugacy classes 
(by the definition of $\sigma_E$). 
Combining this with the formulae in \S \ref{sub21}, the above 
orthogonality relations can be restated as follows:
\[ \sum_{g\in G_{\text{uni}}^F} R_{E'}(g) R_E(g)=\tilde{\omega}_{E',E}
\qquad \mbox{for all $E',E\in\Irr(W)^\gamma$}. \]

\begin{lem} \label{lem31} We have $\det(\tilde{\Omega})=\prod_w [G^F:T_w^F]$
where $w$ runs over a set of representatives of the $\gamma$-conjugacy
classes of~$W$.
\end{lem}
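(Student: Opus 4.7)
The plan is to factor $\tilde{\Omega}$ as a product of a matrix of twisted character values, a diagonal matrix, and the transpose. First I would reduce the defining sum to $\gamma$-conjugacy classes: since both $w \mapsto [G^F:T_w^F]$ and $w \mapsto \operatorname{Tr}(\sigma_E \circ w, E)$ are constant on $\gamma$-classes, if $\mathcal{C}$ denotes the set of $\gamma$-conjugacy classes of $W$ and $w_c$ is a representative of $c \in \mathcal{C}$, then $|c| = |W|/|C_W^\gamma(w_c)|$ gives
\[ \tilde{\omega}_{E',E} = \sum_{c \in \mathcal{C}} \frac{[G^F:T_{w_c}^F]}{|C_W^\gamma(w_c)|}\, \operatorname{Tr}(\sigma_{E'} \circ w_c, E')\, \operatorname{Tr}(\sigma_E \circ w_c, E). \]

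Next I would introduce the matrix $N = (N_{E,c})$ with rows indexed by $E \in \Irr(W)^\gamma$ and columns by $c \in \mathcal{C}$, where $N_{E,c} := \operatorname{Tr}(\sigma_E \circ w_c, E)$, together with the diagonal matrix $D = \operatorname{diag}\bigl([G^F:T_{w_c}^F]/|C_W^\gamma(w_c)|\bigr)_{c \in \mathcal{C}}$. The formula above reads $\tilde{\Omega} = N D N^T$. To turn this into a determinant computation I need $N$ to be square, i.e., $|\Irr(W)^\gamma| = |\mathcal{C}|$; this is the standard fact that the number of $\gamma$-fixed irreducible characters equals the number of $\gamma$-conjugacy classes (a Brauer permutation-lemma argument applied to the permutations of $\Irr(W)$ and of conjugacy classes induced by $\gamma$).

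I would then invoke the second orthogonality relation recalled just above the lemma statement, namely
\[ \sum_{E \in \Irr(W)^\gamma} \operatorname{Tr}(\sigma_E \circ w, E)\, \operatorname{Tr}(\sigma_E \circ w', E) = \begin{cases} |C_W^\gamma(w)| & \text{if } w \sim_\gamma w',\\ 0 & \text{otherwise.}\end{cases} \]
Applied to the class representatives $\{w_c\}$, this says $N^T N = \operatorname{diag}\bigl(|C_W^\gamma(w_c)|\bigr)_{c \in \mathcal{C}}$, and hence $\det(N)^2 = \prod_{c \in \mathcal{C}} |C_W^\gamma(w_c)|$.

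Finally I would combine the pieces:
\[ \det(\tilde{\Omega}) = \det(N)^2 \det(D) = \Bigl(\prod_{c \in \mathcal{C}} |C_W^\gamma(w_c)|\Bigr) \prod_{c \in \mathcal{C}} \frac{[G^F:T_{w_c}^F]}{|C_W^\gamma(w_c)|} = \prod_{c \in \mathcal{C}} [G^F:T_{w_c}^F], \]
which is exactly the claimed formula. The only genuine obstacle is the squareness of $N$, i.e., the equality $|\Irr(W)^\gamma| = |\mathcal{C}|$; everything else is a direct consequence of the two orthogonality formulas already recorded in Section~\ref{sec3}.
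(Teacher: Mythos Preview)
Your proof is correct and follows essentially the same approach as the paper: your matrix $N$ is the transpose of the paper's matrix $X$, and both arguments reduce to computing $\det(N^TN)=\det(XX^{\text{tr}})=\prod_c|C_W^\gamma(w_c)|$ via the twisted second orthogonality relation, together with the equality $|\Irr(W)^\gamma|=|\cC|$. The only cosmetic difference is that the paper reaches the identity $X\tilde{\Omega}X^{\text{tr}}=\operatorname{diag}\bigl([G^F{:}T_{w_i}^F]\,|C_W^\gamma(w_i)|\bigr)$ by passing through the Green-function orthogonality relations, whereas you obtain the equivalent factorisation $\tilde{\Omega}=NDN^T$ directly from the definition of $\tilde{\omega}_{E',E}$; the two routes are immediately interchangeable.
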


\begin{proof} Let $w_1,\ldots,w_n\in W$ be representatives of 
the $\gamma$-conjugacy classes of $W$. It is known that 
then $|\Irr(W)^\gamma|=n$; see, e.g., \cite[Lemma~7.3]{GKP}. Let
us write $\Irr(W)^\gamma=\{E_1,\ldots,E_n\}$. We define a matrix
\[ X=(x_{ii'})_{1\leq i,i'\leq n} \qquad\mbox{where}
\qquad x_{ii'}:=\mbox{Tr}(\sigma_{E_{i'}} {\circ} w_i,E_{i'}).\]
Then, by \S \ref{sub21}, we have $Q_{w_i}(g)=\sum_{1\leq i'\leq n} 
x_{ii'} R_{E_{i'}}(g)$ for all $g\in G_{\text{uni}}^F$ and so
\[ \sum_{g \in G_{\text{uni}}^F} Q_{w_i}(g)Q_{w_j}(g)=
\sum_{1\leq i',j'\leq n} x_{ii'}x_{jj'} \tilde{\omega}_{E_{i'},E_{j'}}=
\bigl( X\cdot \tilde{\Omega}\cdot X^{\text{tr}}\bigr)_{ij}.\]
On the other hand, the left hand side equals $[G^F:T_{w_i}^F]
|C_W^\gamma(w_i)|$ if $i=j$, and $0$ otherwise. Hence, we find that
\[ \det(X\cdot X^{\text{tr}})\det(\tilde{\Omega})=\det(X\cdot 
\tilde{\Omega}\cdot X^{\text{tr}})=\prod_{1\leq i\leq n} [G^F:T_{w_i}^F]
|C_W^\gamma(w_i)|.\]
It remains to show that $\det(X\cdot X^{\text{tr}})=\prod_{1\leq i\leq n} 
|C_W^\gamma(w_i)|$. But this immediately follows from the identity
\[ (X\cdot X^{\text{tr}})_{ij}=\sum_{1\leq i'\leq n} 
\mbox{Tr}(\sigma_{E_{i'}}{\circ} w_i,E_{i'}) \mbox{Tr}(\sigma_{E_{i'}}
{\circ} w_j,E_{i'})=\left\{\begin{array}{cl} |C_W^\gamma(w_i)| & 
\mbox{if $i=j$},\\ 0 & \mbox{if $i\neq j$},\end{array}\right.\]
which holds by the above-mentioned formula from \cite[3.19]{cbms}.
\end{proof}

Following \cite[24.3.4]{L2e}, we define three matrices
\[ P=(p_{E',E}),\qquad  \Omega= (\omega_{E',E}), \qquad 
\Lambda=(\lambda_{E',E}), \] 
where, in each case, the indices run over all $E',E\in\Irr(W)^\gamma$. Here, 
$p_{E',E}$ are the coefficients in \S \ref{sub26}; furthermore, 
\[\omega_{E',E}:=q^{-d_E-d_{E'}} \tilde{\omega}_{E',E}\qquad\mbox{and}
\qquad \lambda_{E',E}:=\sum_{g\in G_{\text{uni}}^F} Y_{E'}(g)Y_E(g).\]
(The integers $d_E$ are defined in \S \ref{sub24}.) 

\begin{prop}[Lusztig \protect{\cite[24.4]{L2e}}] \label{lem32}
We have $P^{\operatorname{tr}}\cdot \Lambda\cdot P=\Omega$. Furthermore, 
for all $E',E\in\Irr(W)^\gamma$, we have $p_{E',E}\in \Z$, 
$\lambda_{E',E}\in \Z$, and $\omega_{E',E}\in \Z$.
\end{prop}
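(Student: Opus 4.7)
The plan is to substitute the formula from \S \ref{sub26} expressing each $R_E|_{G_{\text{uni}}^F}$ as a $\Z$-linear combination of the $Y$-functions into the orthogonality relation for the $R_E$ stated just before Lemma~\ref{lem31}, and then read off the resulting equation as a matrix identity.

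First I would substitute $R_E|_{G_{\text{uni}}^F}=\sum_{E'} q^{d_E}p_{E',E}Y_{E'}$ into both factors of the inner product $\sum_{g\in G_{\text{uni}}^F} R_{E_1}(g)R_{E_2}(g)=\tilde{\omega}_{E_1,E_2}$, which yields
\[
\tilde{\omega}_{E_1,E_2} = q^{d_{E_1}+d_{E_2}} \sum_{E',E''\in\Irr(W)^\gamma} p_{E',E_1}\,p_{E'',E_2}\,\lambda_{E',E''}.
\]
Dividing through by $q^{d_{E_1}+d_{E_2}}$, the left-hand side becomes $\omega_{E_1,E_2}$ by definition, and the right-hand side is the $(E_1,E_2)$ entry of $P^{\operatorname{tr}}\cdot\Lambda\cdot P$. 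This gives $\Omega=P^{\operatorname{tr}}\cdot\Lambda\cdot P$.

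For the integrality claims: $p_{E',E}\in\Z$ was already recalled in \S \ref{sub26} (citing \cite[24.5.2]{L2e}); and the values of the $Y_E$ are integers by \S \ref{sub25}, so the defining sum for $\lambda_{E',E}$ is a finite sum of products of integers and therefore lies in $\Z$. The integrality of $\omega_{E',E}$ is then a formal consequence of the matrix identity: since both $P$ and $\Lambda$ have integer entries, so does the product $P^{\operatorname{tr}}\cdot\Lambda\cdot P=\Omega$.

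Given all the framework set up in Section~\ref{sec1}, there is no genuine obstacle here; the content of the proposition is really to package the preceding orthogonality relations together with the expansion formula for the $R_E$ into a compact matrix identity driving the Lusztig--Shoji algorithm, with integrality of $\omega$ emerging automatically once the $q$-powers have been absorbed into the normalization.
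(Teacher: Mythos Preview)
Your proof is correct and follows essentially the same approach as the paper: substitute the expansion of $R_E|_{G_{\text{uni}}^F}$ from \S\ref{sub26} into the orthogonality relation for the $R_E$ to obtain the matrix identity, then deduce the integrality of $\omega_{E',E}$ from that of $p_{E',E}$ and $\lambda_{E',E}$. The paper's proof is simply terser, saying the relations ``immediately imply'' the matrix identity where you spell out the substitution.
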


\begin{proof} Recall from \S \ref{sub26} the following relations:
\[ R_{E}|_{G_{\text{uni}}^F}=\sum_{E'\in \Irr(W)^\gamma}  q^{d_E}\,
p_{E',E} Y_{E'}\qquad\mbox{for all $E\in \Irr(W)^\gamma$}. \]
This immediately implies the above matrix identity. The fact that 
$p_{E',E}\in \Z$ was already mentioned in \S \ref{sub26}. The fact
that $\lambda_{E',E}\in \Z$ follows from the fact that the $Y$-functions 
are integer-valued; see \S \ref{sub25}. Finally, the above matrix
identity implies that we also have $\omega_{E',E}\in \Z$.
\end{proof}

We obtain further information about the matrices $P$ and $\Lambda$ by 
taking into account the additional information on $p_{E',E}$ in 
\S \ref{sub26}. Let $E,E'\in\Irr(W)^\gamma$ and $\iota_G(E)=(C,\cE)\in 
\cN_G$, $\iota_G(E')=(C',\cE')\in\cN_G$ where $C,C'$ are $F$-stable and 
$F^*\cE\cong \cE$, $F^*\cE'\cong \cE'$. As in \cite[24.1]{L2e}, we write 
$E\sim E'$ if $C=C'$. This gives rise to a partition
\[ \Irr(W)^\sigma=\cI_1\sqcup \ldots \sqcup \cI_h\]
where $\cI_1,\ldots,\cI_h$ are the equivalence classes for the 
relation~$\sim$. Note that $d_E=d_{E'}$ if $E\sim E'$. Thus, we can
define $d_i:=d_E$ where $E\in \cI_i$. 

\begin{rem} \label{rem33} We fix a labelling of the equivalence
classes $\cI_1,\ldots,\cI_h$ such that 
\[ d_1\geq d_2\geq \ldots \geq d_h,\]
and enumerate $\Irr(W)^\sigma$ in a way which is compatible with the 
above partition of $\Irr(W)^\sigma$. Then it is clear that $\Lambda$ has 
a block diagonal shape, where the blocks correspond to the sets
$\cI_1,\ldots,\cI_h$ (see \cite[24.3.2]{L2e}). Furthermore, $P$ has 
an upper block triangular shape with identity matrices on the diagonal (see
\cite[24.2.10, 24.2.11]{L2e}). More precisely, we can write:
\[ P=\left[\begin{array}{c@{\hspace{1mm}}c@{\hspace{3mm}}c@{\hspace{3mm}}
c@{\hspace{3mm}}} I_{e_1} & P_{1,2} & \cdots & P_{1,h} \\ 0 & I_{e_2} & 
& \vdots \\ \vdots & & \ddots & P_{h{-}1,h} \\ 0  & \cdots & 0 & I_{e_h}
\end{array}\right] \qquad \mbox{and}
\qquad \Lambda=\left[\begin{array}{c@{\hspace{3mm}}c@{\hspace{3mm}}
c@{\hspace{3mm}}c@{\hspace{3mm}}} \Lambda_{1} & 0 & \cdots & 0 \\
0 & \Lambda_{2} & & \vdots \\ \vdots & & \ddots & 0 \\ 0 & \cdots
& 0 & \Lambda_{h} \end{array}\right]; \]
here, $e_j=|\cI_j|$ and $I_{e_j}$ denotes the identity matrix of
size $e_j$. For $1\leq i<j\leq h$, the block $P_{i,j}$ has size $e_i
\times e_j$ and entries $p_{E',E}$ for $E'\in \cI_i$ and $E\in \cI_j$; 
similarly, the block $\Lambda_j$ has size $e_j\times e_j$ and entries 
$\lambda_{E',E}$ for $E',E \in \cI_j$. 

With the additional requirement that $P$ and~$\Lambda$ have block shapes 
as above, it easily follows that $P,\Lambda$ are uniquely determined by 
$\Omega$ and the equation $P^{\text{tr}}\cdot \Lambda \cdot P=\Omega$; 
see \cite[24.4]{L2e}, \cite[\S 5]{S1}  (or the proof of Lemma~\ref{redp}
below). 
\end{rem}

With these preparations, we can now establish a first step towards the 
proof of the Congruence Condition in Section~1. Let $d\geq 1$ be the 
order of $\gamma\colon W\rightarrow W$ and
\[ \cM:=\{r\in \Z_{\geq 1}\mid r \equiv 1 \bmod d\}\]
as in Remark~\ref{replace}. Let $r\in \cM$ and replace $F$ by $F^r$.
We obtain analogous matrices as above, which we now denote by
\[ P^{(r)}=\bigl(p_{E',E}^{(r)}\bigr),\qquad  \Omega^{(r)}= 
\bigl(\omega_{E',E}^{(r)}\bigr), \qquad \Lambda^{(r)}=
\bigl(\lambda_{E',E}^{(r)}\bigr) \] 
and where the indices run again over all $E',E\in\Irr(W)^\gamma$. (Note
that $\gamma$ is also the automorphism of $W$ induced by~$F^r$, since
$r\in\cM$.) 

\begin{lem} \label{redp} Let $r\in \cM$ be a prime such that 
\[r \nmid \det (\Omega) \qquad\mbox{and}\qquad \omega^{(r)}_{E',E}
\equiv \omega_{E',E} \bmod r\quad\mbox{for all $E',E\in \Irr(W)^\gamma$}.\]
Then $p^{(r)}_{E',E}\equiv p_{E',E} \bmod r$ and $\lambda^{(r)}_{E',E}
\equiv \lambda_{E',E} \bmod r$ for all $E',E\in\Irr(W)^\gamma$.
\end{lem}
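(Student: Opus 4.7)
The plan is to invert the block algorithm described in Remark~\ref{rem33} that determines $P$ and $\Lambda$ uniquely from $\Omega$, and to propagate the congruence $\Omega^{(r)} \equiv \Omega \bmod r$ through it. The first observation is that every diagonal block $\Lambda_j$ and $\Lambda_j^{(r)}$ is invertible modulo $r$: since $P$ has identity blocks on its diagonal we have $\det P = 1$, so Proposition~\ref{lem32} gives $\det \Omega = \det \Lambda = \prod_{j=1}^{h} \det \Lambda_j$, and the same identity holds with superscript $(r)$; the hypothesis $r \nmid \det \Omega$ combined with $\Omega^{(r)} \equiv \Omega \bmod r$ (whence $\det \Omega^{(r)} \equiv \det \Omega \bmod r$) takes care of both.

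The second step is to write down the recursion that recovers $P$ and $\Lambda$ from $\Omega$. Expanding $P^{\operatorname{tr}} \Lambda P = \Omega$ in block form, using that $P$ is upper block triangular with $P_{i,i} = I$ and that $\Lambda$ is block diagonal, yields for $i \leq j$
\[ \Omega_{i,j} = \sum_{k=1}^{i} P_{k,i}^{\operatorname{tr}} \Lambda_k P_{k,j}, \]
and isolating the $k = i$ term produces the recursion
\[ P_{i,j} = \Lambda_i^{-1}\Bigl(\Omega_{i,j} - \sum_{k<i} P_{k,i}^{\operatorname{tr}} \Lambda_k P_{k,j}\Bigr) \quad (i<j), \qquad \Lambda_j = \Omega_{j,j} - \sum_{k<j} P_{k,j}^{\operatorname{tr}} \Lambda_k P_{k,j}. \]
The entirely analogous formulas hold with every quantity replaced by its $(r)$-counterpart.

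The proof is then concluded by a double induction: on $j = 1, \ldots, h$ and, for fixed $j$, on $i = 1, \ldots, j-1$. The base case $j = 1$ is immediate from $\Lambda_1 = \Omega_{1,1} \equiv \Omega_{1,1}^{(r)} = \Lambda_1^{(r)} \bmod r$, and the inductive step uses (i) the assumed congruence on $\Omega$, (ii) the congruences $\Lambda_k \equiv \Lambda_k^{(r)}$ and $P_{k,l} \equiv P_{k,l}^{(r)}$ for previously processed $(k,l)$, and (iii) the fact that $\Lambda_i \equiv \Lambda_i^{(r)} \bmod r$ with both invertible modulo $r$ forces $\Lambda_i^{-1} \equiv (\Lambda_i^{(r)})^{-1} \bmod r$. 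I do not expect any real obstacle: the only point requiring attention is the bookkeeping of the induction, so that each summand on the right-hand side of the recursion has already been established at the previous stage. Conceptually, the argument is simply that the block ``Cholesky-type'' solution of $P^{\operatorname{tr}} \Lambda P = \Omega$ under the normalisation of Remark~\ref{rem33} is a rational expression in the entries of $\Omega$ and in the matrices $\Lambda_i^{-1}$, and hence commutes with reduction modulo any prime $r$ not dividing $\det \Omega$.
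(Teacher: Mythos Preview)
Your argument is correct. Both your proof and the paper's rest on the same core observation: the block shape of $P$ and $\Lambda$ is independent of the Frobenius map, and since $r\nmid\det\Omega$ each diagonal block $\Lambda_i$ (and $\Lambda_i^{(r)}$) is invertible modulo~$r$, so the block factorisation $P^{\operatorname{tr}}\Lambda P=\Omega$ is uniquely determined modulo~$r$. The difference is purely one of execution. You unwind the recursion explicitly and run a double induction through the blocks, whereas the paper reduces modulo~$r$ at the outset and, from the equality $\bar P^{\operatorname{tr}}\bar\Lambda\bar P=\bar\Omega=(\bar P^{(r)})^{\operatorname{tr}}\bar\Lambda^{(r)}\bar P^{(r)}$, derives
\[
(\bar P^{\operatorname{tr}})^{-1}(\bar P^{(r)})^{\operatorname{tr}}
=\bar\Lambda\,\bar P\,(\bar P^{(r)})^{-1}(\bar\Lambda^{(r)})^{-1},
\]
observing that the left-hand side is lower block triangular with identity diagonal while the right-hand side is upper block triangular, so both equal the identity. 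This is the standard slick proof of uniqueness of an LDU-type decomposition; your induction is its unrolled form. The paper's version is shorter, while yours makes the algorithmic content of Remark~\ref{rem33} explicit and avoids any worry about manipulating inverses before one knows they exist.
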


\begin{proof} For $a\in \Z$, we denote by $\bar{a} \in
\F_r$ the reduction modulo~$r$. If $A=(a_{ij})$ is a matrix with entries 
in $\Z$, then we denote $\bar{A}=(\bar{a}_{ij})$. With this notation, we 
must show that $\bar{P}=\bar{P}^{(r)}$ and $\bar{\Lambda}=
\bar{\Lambda}^{(r)}$ under the given assumptions, which mean that 
\[  \bar{\Omega}^{(r)}=\bar{\Omega}\qquad \mbox{and} \qquad  
\det(\bar{\Omega}^{(r)})=\det(\bar{\Omega})\neq 0.\]
Now note that the Springer correspondence $\iota_G\colon \Irr(W)\rightarrow 
\cN_G$ does not depend on any Frobenius map. Hence, the partition 
$\Irr(W)^\gamma=\cI_1\sqcup \ldots \sqcup \cI_h$ and the block structure 
of $P$, $\Lambda$ in Remark~\ref{rem33} remain the same for $P^{(r)}$, 
$\Lambda^{(r)}$. Thus, $\Lambda$ and $\Lambda^{(r)}$ are block diagonal
matrices; furthermore, $P$ and $P^{(r)}$ are upper block triangular matrices
with identity blocks along the diagonal and so $\det(\bar{P})=
\det(\bar{P}^{(r)})=1$. Consequently, we have $\det(\bar{\Lambda})=
\det(\bar{\Omega})\neq 0$ and $\det(\bar{\Lambda}^{(r)})=
\det(\bar{\Omega}^{(r)})=\det(\bar{\Omega}) \neq 0$. So all of the above 
matrices are invertible. Hence, from the identities
\[ (\bar{P}^{(r)})^{\text{tr}}\cdot \bar{\Lambda}^{(r)}\cdot
\bar{P}^{(r)}=\bar{\Omega}^{(r)}=\bar{\Omega}=\bar{P}^{\text{tr}}\cdot
\bar{\Lambda} \cdot \bar{P}\]
we can deduce the identiy 
\[  (\bar{P}^{\text{tr}})^{-1}\cdot (\bar{P}^{(r)})^{\text{tr}}=
\bar{\Lambda}\cdot \bar{P}\cdot (\bar{P}^{(r)})^{-1}\cdot 
(\bar{\Lambda}^{(r)})^{-1}.\]
Since the block shape remains the same when passing from $F$ to $F^r$, the
left hand side of the above identity is a block lower triangular matrix 
with identity blocks along the diagonal, while the right hand side is a 
block upper triangular matrix. Hence, we conclude that $\bar{P}=
\bar{P}^{(r)}$ and $\bar{\Lambda}=\bar{\Lambda}^{(r)}$, as desired.
\end{proof}

\begin{rem} \label{polp} Arguing as in the first part of the
proof of \cite[Theorem~24.8]{L2e}, one sees that there are well-defined
polynomials $\pi_{E',E}\in\Q[\bq]$ (where $\bq$ is an indeterminate) such 
that $p_{E',E}^{(r)}=\pi_{E',E}(q^r)$ for all $r\in\cM$. With a little
extra work, one can show that $\pi_{E',E}\in\Z[\bq]$. This would, of 
course, also imply the conclusion of Lemma~\ref{redp}, as far as the 
$p_{E',E}$  and $p_{E',E}^{(r)}$ are concerned. 
\end{rem}

\section{Proof of the Congruence Condition} \label{sec4}

As remarked in \cite[p.~202]{HaTu}, it is sufficient to prove the
Congruence Condition in Section~1 in the case where $G$ is simple of 
adjoint type. We assume for the rest of this section that this is the case. 
Let us begin with an endomorphism $F'\colon G\rightarrow G$ such that some 
power of $F'$ is a Frobenius map. Then $G^{F'}$ is one of the groups
considered by Steinberg \cite[\S 11.6]{St68}.
It has been already shown in \cite[Prop.~7.7]{HaTu} that the Congruence 
Condition holds for $(G,F')$ of type ${^3\!D}_4$, ${^2\!B}_2$, ${^2\!G}_2$, 
${^2\!F}_4$. So it remains to consider the case where $F'=F$ is a Frobenius 
map. Assume now that this is the case. Thus, we have 
\[ F=\tilde{\gamma}\circ F_p^m=F_p^m \circ \tilde{\gamma}\qquad
(m\geq 1)\]
as in Section~\ref{sec2}, where $\tilde{\gamma}\colon G \rightarrow G$ is
a graph automorphism of order $d\in\{1,2,3\}$ leaving $T_0,B_0$ invariant, 
and $F_p\colon G \rightarrow G$ is a Frobenius map corresponding to a split 
$\F_p$-rational structure, such that $F_p(t)=t^p$ for all $t\in T_0$. 
Then the map $\gamma\colon W \rightarrow W$ induced by $\tilde{\gamma}$ 
also has order~$d$. Since groups of type ${^3\!D}_4$ have already been dealt 
with, we may actually assume that $d\in \{1,2\}$. We set 
\[ \cM:=\{r\in \Z_{\geq 1}\mid r \equiv 1 \bmod d\}\qquad\mbox{($d\in 
\{1,2\}$)}\]
as in Remark~\ref{replace}. (Thus, $\cM$ consists of all positive integers 
if $d=1$, and of all odd positive integers if $d=2$.) Let $r\in\cM$. Working 
with both $F$ and $F^r$, we will have to consider two sets of matrices:
\[\renewcommand{\arraystretch}{1.3}\begin{array}{c@{\hspace{5pt}}
c@{\hspace{5pt}}c@{\hspace{5pt}}c@{\hspace{5pt}}c@{\hspace{5pt}}
c@{\hspace{5pt}}c@{\hspace{5pt}}c@{\hspace{5pt}}c}
P&=&\bigl(p_{E',E}\bigr),\qquad  & \Omega&= &\bigl(\omega_{E',E}\bigr), 
\qquad &\Lambda&=&\bigl(\lambda_{E',E}\bigr),\\
P^{(r)}&=&\bigl(p_{E',E}^{(r)}\bigr),\qquad & \Omega^{(r)}&=& 
\bigl(\omega_{E',E}^{(r)}\bigr), \qquad &\Lambda^{(r)}&=&
\bigl(\lambda_{E',E}^{(r)}\bigr),\end{array}\] 
defined as in the previous section. For each $E\in\Irr(W)^\gamma$, we
have an almost character $R_E$ and a $Y$-function $Y_E$ for $G^F$; similarly,
we have an almost character and a $Y$-function for $G^{F^r}$, which we
denote by $R_E^{(r)}$ and $Y_E^{(r)}$, respectively. This notation will be 
used throughout this section.

\begin{lem} \label{ftor} Let $r\in \cM$ be a prime. Then we have $|G^{F^r}| 
\equiv |G^F| \bmod r$ and $|T_w^{F^r}|\equiv |T_w^F|\bmod r$
for any $w\in W$.
\end{lem}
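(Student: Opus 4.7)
The plan is to rewrite $|T_w^F|$ and $|G^F|$ as explicit integer polynomial expressions in~$q$ depending on the finite-order twist $\gamma_0$, and then to combine the Frobenius identity in characteristic~$r$ with Fermat's little theorem. For the torus, I would use the identification of $Y(T_w)\otimes\mathbb{Q}$ with $V:=Y(T_0)\otimes\mathbb{Q}$ via the twisting element, under which the action of $F$ on $V$ becomes $qN_w$ with $N_w\in\mathrm{GL}(V)$ an integer matrix of finite order built from $w$ and $\gamma_0$. The Lang--Steinberg theorem then gives
\[
|T_w^F|=\bigl|\det\nolimits_V(qN_w-1)\bigr|,\qquad |T_w^{F^r}|=\bigl|\det\nolimits_V(q^rN_w^r-1)\bigr|,
\]
the second identity using $(qN_w)^r=q^rN_w^r$ (the scalar $q$ commutes with $N_w$).

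Set $P(T):=\det_V(TN_w-1)$ and $P^{(r)}(T):=\det_V(TN_w^r-1)$, both in $\mathbb{Z}[T]$. In the matrix ring over $\mathbb{F}_r[T]$, the elements $TN_w$ and $1$ commute, so the characteristic-$r$ Frobenius identity gives $(TN_w-1)^r=T^rN_w^r-1$; taking determinants produces the polynomial congruence $P(T)^r\equiv P^{(r)}(T^r)\pmod r$ in $\mathbb{Z}[T]$. Specialising $T=q$ and applying Fermat's little theorem to the integer $P(q)$ yields
\[
P^{(r)}(q^r)\equiv P(q)^r\equiv P(q)\pmod r.
\]
The signs of $P(q)$ and $P^{(r)}(q^r)$ are $\det(N_w)$ and $\det(N_w)^r$ respectively; they agree whenever $r$ is odd, and the only disagreement case $\det(N_w)=-1$, $r=2$ is harmless, since $-1\equiv 1\pmod 2$. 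So $|T_w^{F^r}|\equiv|T_w^F|\pmod r$ in all cases.

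For $|G^F|$, I would invoke Steinberg's order formula $|G^F|=q^N\prod_{i=1}^{\ell}(q^{d_i}-\varepsilon_i)$, where $N=|\Phi^+|$, the $d_i$ are the degrees of $W$, and each $\varepsilon_i$ is a root of unity whose order divides $d$. Because the case $d=3$ (i.e.\ ${}^3\!D_4$) has already been excluded earlier in this section, we have $d\in\{1,2\}$, which forces $\varepsilon_i\in\{\pm1\}$ and $\varepsilon_i^r=\varepsilon_i$ for every $r\in\cM$. Hence $|G^{F^r}|=q^{Nr}\prod_i(q^{rd_i}-\varepsilon_i)$, and Fermat's little theorem gives $q^{Nr}\equiv q^N$ and $q^{rd_i}\equiv q^{d_i}$ modulo~$r$, from which $|G^{F^r}|\equiv|G^F|\pmod r$ follows immediately.

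The only mild obstacle I anticipate is the sign check in the torus case; but as above this is resolved by the clean dichotomy between odd $r$ (signs automatically agree) and $r=2$ (congruence trivial mod~$2$). Beyond that, the argument is a direct application of the characteristic-$r$ Frobenius identity together with Fermat's little theorem, with no new ingredients needed.
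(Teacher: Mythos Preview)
Your argument is correct. For $|G^F|$ it is essentially the paper's: since $r\equiv 1\bmod d$ one has $\tilde\gamma^r=\tilde\gamma$, so $F^r=\tilde\gamma\circ F_p^{mr}$ carries the same twist and the order formula is the \emph{same} integer polynomial in~$q$, evaluated at~$q^r$; Fermat then gives the congruence. For the torus the paper proceeds in the same one-line fashion: it records $f_w(\bq)=\det\bigl(\bq\cdot\mathrm{id}-(\tilde\gamma^*)^{-1}w\bigr)\in\Z[\bq]$, applies Carter's formula to the Frobenius $F^r$ (which again has twist~$\tilde\gamma$) to obtain $|T_w^{F^r}|=f_w(q^r)$, and invokes Fermat once more. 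Your route for the torus is genuinely different: you keep the fixed algebraic torus $T_w$, note that $F^r$ acts on its cocharacter lattice as $q^rN_w^r$, and relate the two a~priori different determinants via the characteristic-$r$ identity $(TN_w-1)^r\equiv T^rN_w^r-1$ before specialising $T=q$ and applying Fermat. The paper's version is shorter, but it tacitly uses that, after ``replacing $F$ by $F^r$'', the relevant order polynomial is still $f_w$; your matrix-Frobenius trick bypasses this entirely, proving the congruence directly from the action of $F^r$ on $Y(T_w)$, at the modest cost of the sign bookkeeping (which you handle correctly, including the $r=2$ case).
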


\begin{proof} Let $X^\vee(T_0)=\mbox{Hom}(k^\times,T_0)$ be 
the co-character group of $T_0$ (a free abelian group of rank equal to 
$\dim T_0$). Then $\tilde{\gamma}$ induces an automorphism 
$\tilde{\gamma}^*\in \mbox{Aut}(X^\vee)$ such that $\tilde{\gamma}^*(\nu)
(x)=\tilde{\gamma}(\nu(x))$ for all $x\in k^\times$ and $\nu\in X^\vee$. 
We can also naturally regard $W$ as a subgroup of $\mbox{Aut}(X^\vee)$;
see \cite[\S 1.9]{C2}. Let $\bq$ be an indeterminate over $\Z$ and define
\[ f_w:=\det\bigl(\bq\,\mbox{id}_{X^\vee}-(\tilde{\gamma}^*)^{-1}{\circ} w
\bigr) \in\Z[\bq] \qquad \mbox{for any $w\in W$}.\]
Then we have $|T_w^F|=f_w(q)$ where $q=p^m$; see \cite[Prop.~3.3.5]{C2}. 
Furthermore, we have $|G^F|=f(q)$, where we define
\[ f:=\bq^{|\Phi^+|}f_1\sum_{w\in W,\gamma(w)=w} \bq^{l(w)}\in \Z[\bq];\] 
see \cite[\S 2.9]{C2}. Now let us replace $F$ by $F^r$, where $r\in \cM$.
Then $F^r=\tilde{\gamma} \circ F_p^{rm}=F_p^{rm}\circ \tilde{\gamma}$. 
Consequently, we obtain that $|T_w^{F^r}|=f_w(q^r)$ and $|G^{F^r}|=f(q^r)$. 
If $r\in \cM$ is a prime, then Fermat's Little Theorem yields the desired 
congruences.
\end{proof}

\begin{lem} \label{lem41} Let $r\in \N$ be a prime such that 
$r\nmid |G^{F^r}|$. Then $\omega_{E',E}^{(r)} \equiv \omega_{E',E} 
\bmod r$, $p_{E',E}^{(r)} \equiv p_{E',E} \bmod r$ and 
$\lambda_{E',E}^{(r)} \equiv \lambda_{E',E} \bmod r$ for all 
$E',E \in\Irr(W)^\gamma$.
\end{lem}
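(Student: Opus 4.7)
The plan is to reduce all three congruences to a single application of Lemma~\ref{redp}. What remains to be checked is (a) that $r\in\cM$, (b) that $r\nmid\det(\Omega)$, and (c) that $\omega^{(r)}_{E',E}\equiv \omega_{E',E}\bmod r$ for all $E',E\in\Irr(W)^\gamma$; the congruences for $p_{E',E}$ and $\lambda_{E',E}$ will then drop out of that lemma.

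Points (a) and (b) I would dispose of as a warm-up. Since $p$ divides $|G^{F^r}|$, the hypothesis $r\nmid|G^{F^r}|$ forces $r\neq p$ and hence $\gcd(r,q)=1$; since $G^F\subseteq G^{F^r}$, also $r\nmid|G^F|$. If $d=2$, the group $G^{F^r}$ has even order (a standard property of finite groups of Lie type), so $r\neq 2$ and hence $r$ is odd, i.e.\ $r\in\cM$; the case $d=1$ is trivial. By Lemma~\ref{lem31}, $\det(\tilde\Omega)$ is a product of indices $[G^F:T_w^F]$, each dividing $|G^F|$ and hence coprime to~$r$. Since $\tilde\Omega=D\,\Omega\,D$ with $D=\operatorname{diag}(q^{d_E})$ and $\gcd(r,q)=1$, we conclude that $r\nmid\det(\Omega)$.

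The substantive step is (c). Using that both $[G^F:T_w^F]$ and the traces $\operatorname{Tr}(\sigma_E\circ w,E)$ are constant on $\gamma$-conjugacy classes of~$W$, I would rewrite
\[
\tilde\omega_{E',E}=\sum_{w}\frac{[G^F:T_w^F]}{|C_W^\gamma(w)|}\,\operatorname{Tr}(\sigma_{E'}\circ w,E')\,\operatorname{Tr}(\sigma_E\circ w,E),
\]
where $w$ runs over a set of representatives of the $\gamma$-conjugacy classes of~$W$. The coefficient $[G^F:T_w^F]/|C_W^\gamma(w)|=|G^F|/|N_G(T_w)^F|$ is the number of $G^F$-conjugacy classes of $F$-stable maximal tori of type~$w$, hence an integer; moreover $|C_W^\gamma(w)|$ divides $|N_G(T_w)^F|$, which divides $|G^F|$, so $r\nmid|C_W^\gamma(w)|$. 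Lemma~\ref{ftor} gives $|G^{F^r}|\equiv|G^F|$ and $|T_w^{F^r}|\equiv|T_w^F|\bmod r$, with both quantities invertible mod~$r$, and hence $[G^{F^r}:T_w^{F^r}]\equiv[G^F:T_w^F]\bmod r$ as integers; combining with the invertibility of $|C_W^\gamma(w)|$ yields $\tilde\omega^{(r)}_{E',E}\equiv\tilde\omega_{E',E}\bmod r$. Finally Fermat's Little Theorem gives $q^{r(d_E+d_{E'})}\equiv q^{d_E+d_{E'}}\bmod r$, and dividing by these invertible powers of~$q$ converts the last congruence into $\omega^{(r)}_{E',E}\equiv\omega_{E',E}\bmod r$. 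Lemma~\ref{redp} then finishes the proof. The only real bookkeeping point---and indeed the heart of the whole argument---is that the single hypothesis $r\nmid|G^{F^r}|$ propagates through the various subgroup orders $|T_w^F|$, $|N_G(T_w)^F|$, $|C_W^\gamma(w)|$ and through Fermat's theorem applied to powers of~$q$, making every implicit denominator a unit modulo~$r$.
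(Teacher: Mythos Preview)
Your proof is correct and follows essentially the same route as the paper's: verify $r\in\cM$, rewrite $\tilde\omega_{E',E}$ as a sum over $\gamma$-conjugacy class representatives, use Lemma~\ref{ftor} together with $r\nmid|C_W^\gamma(w)|$ (via $N_G(T_w)^F/T_w^F\cong C_W^\gamma(w)$) to get the congruence for $\tilde\omega$, pass to $\omega$ by Fermat, and invoke Lemmas~\ref{lem31} and~\ref{redp}. One inconsequential slip: $|G^F|/|N_G(T_w)^F|$ is the number of $F$-stable maximal tori of type~$w$, not the number of $G^F$-conjugacy classes of such tori; but you only use that this quotient is an integer (or, really, that the denominator $|C_W^\gamma(w)|$ is coprime to~$r$), so nothing is affected.
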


\begin{proof} Since $G$ is non-trivial, the order of $|G^{F^r}|$ is even and
so $r>2$. Hence, we automatically have $r\in \cM$. Let $w_1,\ldots,w_n\in W$ 
be a set of representatives of the $\gamma$-conjugacy classes of $W$. First
we consider the matrix $\tilde{\Omega}$. We have 
\begin{align*}
\tilde{\omega}_{E',E}&=\sum_{1\leq i \leq n} |C_W^\gamma(w_i)|^{-1}[G^F:
T_{w_i}^F] \mbox{Tr}(\sigma_{E'}{\circ} w_i, E')\mbox{Tr}(\sigma_E{\circ} 
w_i,E), \\ \tilde{\omega}_{E',E}^{(r)}&=\sum_{1\leq i \leq n} 
|C_W^\gamma(w_i)|^{-1} [G^{F^r}: T_{w_i}^{F^r}]\mbox{Tr}(\sigma_{E'}{\circ}
w_i, E') \mbox{Tr}(\sigma_E {\circ} w_i,E).
\end{align*}
Since $G^F\subseteq G^{F^r}$, we also have $r\nmid |G^F|$. Hence,
using Lemma~\ref{ftor}, we obtain that 
\[ [G^{F^r}:T_w^{F^r}]\equiv [G^F:T_w^F] \bmod r\qquad\mbox{for all 
$w\in W$}.\]
By \cite[Prop.~3.3.6]{C2}, we have $N_G(T_w^F)/T_w^F\cong C_W^\gamma(w)$.
Since $N_G(T_w)^F\subseteq G^F$, we conclude that $r\nmid |C_W^\gamma(w)|$.
Consequently, we have $\tilde{\omega}_{E',E}^{(r)}\equiv 
\tilde{\omega}_{E',E}\bmod r$. Now 
\[\tilde{\omega}_{E',E}=q^{d_E+d_{E'}}{\omega}_{E',E}\quad \mbox{and}
\quad\tilde{\omega}_{E',E}^{(r)}=q^{(d_E+d_{E'})r}\omega_{E',E}^{(r)}.\]
Using $r\nmid q$ and Fermat's Little Theorem, we conclude that we also 
have ${\omega}_{E',E}^{(r)}\equiv {\omega}_{E',E}\bmod r$. 
Finally, by Lemma~\ref{lem31}, we have $r\nmid \det(\tilde{\Omega})$ and, 
hence, also $r\nmid \det(\Omega)$ (again, since $r\nmid q$). So we can 
apply Lemma~\ref{redp}. 
\end{proof}

\begin{lem} \label{lem42} Let $u\in G^F$ be unipotent and $E\in
\Irr(W)^\gamma$. Let $r\in\N$ be a prime such that 
$r\nmid |G^{F^r}|$. Then $Y_E^{(r)}(u)=Y_E(u)$.
\end{lem}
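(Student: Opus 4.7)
The plan is to reduce the claim to a character-theoretic identity on the finite group $A(u_0)$, for a suitable representative $u_0$ of the unipotent class attached to $E$ under the Springer correspondence, and then to combine Theorem~\ref{thmsplit} with the rationality of the Springer local system at $u_0$. First, I would dispose of the trivial case: writing $\iota_G(E)=(C,\cE)$, if $u\notin C$ then $Y_E(u)=0=Y_E^{(r)}(u)$ directly from the definition in \S\ref{sub25}, so one may assume $u\in C^F$.

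Next, fix an element $u_0\in C^F$ satisfying $(\clubsuit)$, so that $F$, and hence also $F^r$, acts trivially on $A(u_0)$. Write $u=hu_0h^{-1}$ with $h\in G$, set $x=h^{-1}F(h)\in C_G(u_0)$, and let $a\in A(u_0)$ denote its image. Then $u$ is $G^F$-conjugate to $u_a$ in the sense of Remark~\ref{rem20}, and by Lemma~\ref{lu11} we have $Y_E(u)=\delta_E\operatorname{Tr}(a,\cE_{u_0})$. A short cocycle computation yields
\[ h^{-1}F^r(h)\;=\;x\cdot F(x)\cdot F^2(x)\cdots F^{r-1}(x),\]
and because $F$ is trivial on $A(u_0)$, the image of this product in $A(u_0)$ is $a^r$. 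Thus, viewed via $F^r$, the element $u$ is $G^{F^r}$-conjugate to $u_{a^r}$, and Lemma~\ref{lu11} applied to $F^r$ gives $Y_E^{(r)}(u)=\delta_E^{(r)}\operatorname{Tr}(a^r,\cE_{u_0})$.

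From here, the signs are controlled by Theorem~\ref{thmsplit}: one has $\delta_E^{(r)}=(\delta_E)^r$. Since $|G^{F^r}|$ is even, the hypothesis $r\nmid |G^{F^r}|$ forces $r$ to be odd (as in the opening lines of the proof of Lemma~\ref{lem41}), and so $\delta_E^{(r)}=\delta_E$. It remains to establish $\operatorname{Tr}(a^r,\cE_{u_0})=\operatorname{Tr}(a,\cE_{u_0})$. For this, \S\ref{sub25} gives $Y_E(u_a)\in\Z$ for every $a\in A(u_0)$; together with $\delta_E=\pm 1$ this forces the character of $\cE_{u_0}$ to be $\Z$-valued, so $\cE_{u_0}$ is a rational representation of $A(u_0)$. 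On the other hand, Lang's theorem combined with $(\clubsuit)$ identifies $|A(u_0)|$ with $|C_G(u_0)^F|/|C_G^\circ(u_0)^F|$, which divides $|G^F|$ and hence $|G^{F^r}|$; so $\gcd(r,|A(u_0)|)=1$. Thus $a^r$ generates the same cyclic subgroup of $A(u_0)$ as $a$, and rationality forces the two trace values to coincide.

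The main obstacle is the bookkeeping in the second step: one has to track how the Lang cocycle iterates under the passage $F\mapsto F^r$ and, crucially, use the triviality of the $F$-action on $A(u_0)$ afforded by $(\clubsuit)$ in order to reduce the resulting element of $A(u_0)$ to $a^r$. Beyond this elementary but careful identification, the only substantive inputs are Theorem~\ref{thmsplit} (to compare $\delta_E$ and $\delta_E^{(r)}$) and the rationality of $\cE_{u_0}$ (to compare the two traces); once these are in place, the desired equality $Y_E^{(r)}(u)=Y_E(u)$ follows.
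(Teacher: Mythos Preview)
Your argument is correct and follows essentially the same route as the paper: reduce to $u\in C$, pick $u_0$ as in~$(\clubsuit)$, compare Lang cocycles to see that the parameter in $A(u_0)$ passes from $a$ to $a^r$, use Theorem~\ref{thmsplit} and oddness of~$r$ to identify the signs, and conclude via the $\Z$-valuedness of the character of $\cE_{u_0}$ together with $r\nmid |A(u_0)|$. Two small remarks: the paper explicitly justifies the existence of $u_0$ satisfying~$(\clubsuit)$ (for $G$ simple adjoint) by citing Shoji and Liebeck--Seitz, which you tacitly assume; and your phrase ``rational representation'' overstates what is needed and what is known---you only have (and only use) that the \emph{character} of $\cE_{u_0}$ is $\Z$-valued, which already forces $\operatorname{Tr}(a^r,\cE_{u_0})=\operatorname{Tr}(a,\cE_{u_0})$.
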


\begin{proof} As in the previous proof, we have $r>2$ and $r\in \cM$. 
Let $\iota_G(E)=(C,\cE)$ where $C$ is $F$-stable and
$F^*\cong \cE$. If $u\not\in C$, then $Y_E(u)=Y_E^{(r)}(u)=0$. So
now let $u\in C$. Since we are assuming that $G$ is simple
of adjoint type, it is known that there exists an element $u_0\in C^F$
such that $(\clubsuit)$ in Remark~\ref{rem20} holds. For $G$ of classical 
type, such a representative $u_0\in C$ is explicitly described by 
Shoji \cite[\S 2]{S6}; see, e.g., \cite[2.10]{S6} for type ${^2\!D}_n$ in
arbitrary characteristic. If $G$ is of exceptional type, the existence of
$u_0$ is guaranteed by \cite[Lemma~20.16]{LiSe}, the proof of which
involves a certain amount of case--by--case considerations. For
example, for type $E_7,E_8$ one can simply look through the list of 
class representatives determined by Mizuno \cite{Miz2}. See also the
discussion in \cite[\S 2]{Tay} (in good characteristic). In any case,
let us now fix an element $u_0\in C^F$ such that $F$ acts trivially on 
$A(u_0)$. Then $u=gu_0g^{-1}$ for some $g\in G$, and we have $x:=
g^{-1}F(g)\in C_G(u_0)$; we denote by~$a$ the image of~$x$ in $A(u)$. 
Consequently, we have 
\[ Y_E(u)=\delta_E\mbox{Tr}(a,\cE_{u_0}), \qquad \mbox{see
Lemma~\ref{lu11}}.\]
Note that, since the values of the $Y$-functions are integers, the same
is true for $\mbox{Tr}(a,\cE_{u_0})$. Let us now replace $F$ by $F^r$. 
We still have $x':=g^{-1}F^r(g)\in C_G(u_0)$; consequently,
if we denote by~$a'$ the image of $x'$ in $A(u_0)$, then 
\[ Y_E^{(r)}(u)=\delta_E^{(r)}\mbox{Tr}(a',\cE_{u_0})\]
(and, again, these values are integers). By Theorem~\ref{thmsplit}, we 
have $\delta_E^{(r)}=(\delta_E)^r$. Since $r$ is odd, we conclude that 
$\delta_E^{(r)}=\delta_E$. Hence, it remains to show that 
\[\mbox{Tr}(a,\cE_{u_0})=\mbox{Tr}(a', \cE_{u_0}).\]
Now, since $F$ acts trivially on $A(u_0)$, we 
have $A(u_0)\cong C_G(u_0)^F/C_G^\circ (u_0)^F$ (see \cite[p.~33]{C2}). 
Hence, since $r\nmid |G^F|$, we also have $r\nmid |A(u_0)|$. But then a 
well-known result in the character theory of finite groups implies that 
$\mbox{Tr}(a,\cE_{u_0})=\mbox{Tr}(a^r,\cE_{u_0})$. So it will be enough to 
show that $a^r=a'$. This is seen as follows. We have $F(g)=gx$ and $F^r(g)=
gx'$, with $x,x'\in C_G(u_0)$, which implies that 
\[x'=xF(x)F(x)^2\cdots F^{r-1}(x).\]
Since $F$ acts trivially on $C_G(u_0)/C_G^\circ(u_0)$, we have 
$x^{-1}F^i(x)\in C_G^\circ(u_0)$ for all $i\geq 1$. This yields $x'=x^rc$ 
for some $c\in C_G^\circ(u_0)$ and, hence, $a'=a^r$, as desired. 
\end{proof}

We can now complete the proof of the Congruence Condition, as follows. 
Let $u\in G^F$ be unipotent and $T\subseteq G$ be an $F$-stable maximal 
torus. Let $w \in W$ be such that $T$ is obtained from $T_0$ by twisting 
with~$w$ (relative to $F$). Thus, we have $Q_{T,F}=Q_w$. Let $r\in \N$ be 
a prime such that $r\nmid |G^{F^r}|$; then $r>2$ and $r\in \cM$ (see the 
above proofs). As pointed out in \cite[p.~204]{HaTu}, we also have that $T$ 
is obtained by twisting with~$w$ relative to $F^r$. Thus, we have 
$Q_{T,F^r}(u)=Q_w^{(r)}(u)$ where, as usual, we indicate by the superscript 
``$(r)$'' that we mean the Green function for $G^{F^r}$. Now, by
Proposition~\ref{sumup}, we have the following formulae.
\begin{align*}
Q_w(u) &=\sum_{E',E\in \Irr(W)^\gamma} \mbox{Tr}(\sigma_E{\circ} w,E)q^{d_E}
p_{E',E} Y_{E'}(u),\\
Q_w^{(r)}(u) &=\sum_{E',E\in \Irr(W)^\gamma} \mbox{Tr}(\sigma_E{\circ} w,E)
q^{d_Er} p_{E',E}^{(r)} Y_{E'}^{(r)}(u).
\end{align*}
By Lemma~\ref{lem41}, we have $p_{E',E}^{(r)} \equiv p_{E',E} \bmod r$; 
furthermore, by Lemma~\ref{lem42}, we have $Y_E^{(r)}(u)=Y_E(u)$. Finally, 
by Fermat's Little Theorem, we have $q^{d_E}\equiv q^{d_Er} \bmod r$.
Hence, we conclude that $Q_w(u)\equiv Q_w^{(r)}(u)\bmod r$. Thus, the 
Congruence Condition in Section~1 is proved. \qed


\end{document}